\documentclass[a4paper]{article}

\usepackage[english]{babel}
\usepackage[utf8x]{inputenc}
\usepackage[T1]{fontenc}
\usepackage{lipsum}
\usepackage{blindtext}
\usepackage{graphicx,amssymb,amsmath,textcomp}
\usepackage[a4paper,top=3cm,bottom=2cm,left=3cm,right=3cm,marginparwidth=1.75cm]{geometry}
\usepackage{tikz}
\usetikzlibrary{matrix,arrows,positioning,calc}
\usepackage{verbatim}


\usepackage{amsmath,amsthm}
\usepackage{faktor}
\usepackage{xfrac} 
\usepackage{graphicx}
\usepackage[colorinlistoftodos]{todonotes}
\usepackage[colorlinks=true, allcolors=blue]{hyperref}

\usepackage{mathtools}

\numberwithin{equation}{section} 

\newtheorem{corollary}{Corollary}[subsection] 
\newtheorem{lemma}{Lemma}[subsection]

\theoremstyle{proposition}
\newtheorem{proposition}{Proposition}[subsection]

\theoremstyle{definition}
\newtheorem{definition}{Definition}[subsection]
\newtheorem{remark}{Remark}[subsection]
\newtheorem{example}{Example}[subsection]

\title{\textbf{Notes on Moduli theory, Stacks and 2-Yoneda's Lemma}}
\author{Kadri \.{I}lker Berktav 
} 
\date{\vspace{-5ex}}

\begin{document}
\maketitle

\begin{abstract}
This note is a survey on the basic aspects of moduli theory along with some examples. In that respect, one of the purposes of this current document is to understand how the introduction of stacks circumvents the non-representability problem of the corresponding moduli functor $\mathcal{F}$  by using the 2-category of stacks. To this end, we shall briefly revisit the basics of 2-category theory and 
present a 2-categorical version of  Yoneda's lemma for the "refined" moduli functor $\mathcal{F}$. Most of the material below are standard and can be found elsewhere in the literature. For an accessible introduction to moduli theory and stacks, we refer to \cite{BenZ, Neumann}. For an extensive treatment to the case of moduli of curves, see \cite{Hain1, Hain2, Harris and Morrison}. Basics of 2-category theory and further discussions can be found in \cite{Stacks}.

\end{abstract}
\tableofcontents

    \section{Functor of points, representable functors, and Yoneda's Lemma}
Main aspects of a moduli problem of interest can be encoded by a certain functor, namely \textit{a moduli functor} of the form
\begin{equation}
\mathcal{F}: \mathcal{C}^{op} \longrightarrow \ Sets
\end{equation} where $ \mathcal{C}^{op} $ is the \textit{opposite} category of the category $\mathcal{C}$, and $Sets$ denotes the category of sets. Equivalently, it is just a \textit{contravariant functor} from the category $\mathcal{C}$ to $Sets$. The existence of \textit{a fine moduli space}  corresponds to the representability of this moduli functor. More details are to be discussed below.

\begin{definition}
	Let $\mathcal{C}$ be a category. For any object $U$ in $\mathcal{C}$ we define a functor 
	\begin{equation}
	h_U: \mathcal{C}^{op} \longrightarrow \ Sets
	\end{equation} as follows: 
	\begin{enumerate}
		\item For each object $X\in Ob(\mathcal{C})$, \ $X \longmapsto h_U(X):=Mor_{\mathcal{C}}(X,U)$ 
		\item For each morphism $X\xrightarrow{f}Y$,  \begin{equation*}
		\Big(X\xrightarrow{f}Y \Big) \ \longmapsto \Big(Mor_{\mathcal{C}}(Y,U) \xrightarrow{f^*} Mor_{\mathcal{C}}(X,U), \ g\mapsto g \circ f  \Big).
		\end{equation*} 
	\end{enumerate} This functor $h_U$ is called "the Yoneda functor" or "the functor of points".
\end{definition}
As we shall see below, this functor can be used to recover any object $U$ of a category $\mathcal{C}$ by understanding  morphisms into it via  Yoneda's Lemma.  Let $ Fun(\mathcal{C}^{op}, Sets) $ denote the category of functors from $\mathcal{C}^{op}$ to $Sets$ with objects being functors $ \mathcal{F}: \mathcal{C}^{op} \longrightarrow \ Sets $ and morphisms being natural transformations between functors, then using the definition of $h_U$, one can introduce the following functor as well:
\begin{equation} \label{defn_Yonede functor h}
h: \mathcal{C}\longrightarrow \ Fun(\mathcal{C}^{op}, Sets)
\end{equation} where
\begin{enumerate}
	\item For each object $U\in Ob(\mathcal{C})$, \ $U \longmapsto h_U=Mor_{\mathcal{C}}(\cdot,U)$ 
	\item To each morphism $U\xrightarrow{f}V$,  $h$ assigns a natural transformation 
	\begin{equation}
	{\small \begin{tikzpicture}
		\matrix[matrix of nodes,column sep=1.7cm] (cd)
		{
			$ \mathcal{C}^{op} $ & $ Sets. $ \\
		};
		\draw[->] (cd-1-1) to[bend left=50] node[label=above:$ {h_U} $] (U) {} (cd-1-2);
		\draw[->] (cd-1-1) to[bend right=50,name=D] node[label=below:${ h_V}$] (V) {} (cd-1-2);
		\draw[double,double equal sign distance,-implies,shorten >=10pt,shorten <=10pt] 
		(U) -- node[label=left:${\tiny h_f}$] {} (V);
		\end{tikzpicture}}
	\end{equation} Here $h_f$ is defined as follows:
	\begin{enumerate}
		\item For each object $X$ in $\mathcal{C}$, we set \begin{equation}
		h_f(X): h_U(X) \rightarrow h_V(X), \ \ g \longmapsto f \circ g .
		\end{equation}
		\item Given a morphism $X\xrightarrow{\eta} Y$ in $\mathcal{C}$, from the associativity property of the composition map, 
		the diagram 
		\begin{equation}
		\begin{tikzpicture}
		\matrix (m) [matrix of math nodes,row sep=3em,column sep=4em,minimum width=2em] {
			h_U(Y)  & h_V(Y) \\
			h_U(X) & h_V(X)\\};
		\path[-stealth]
		(m-1-1) edge  node [left] {{\small $ \circ \eta $}} (m-2-1)
		edge  node [above] {{\small $  f \circ $}} (m-1-2)
		(m-2-1.east|-m-2-2) edge  node [below] {} node [below] {{\small $ f \circ $}} (m-2-2)
		(m-1-2) edge  node [right] {{\small $ \circ \eta $}} (m-2-2);
		\end{tikzpicture}
		\end{equation} commutes.
	\end{enumerate} 
\end{enumerate} 
\begin{definition} \label{defn_fullyfaithfulness and essentially surje}
	\cite{Vakil} Let $\mathcal{C}, \mathcal{D}$ be two categories.
	\begin{enumerate}
		\item A functor $\mathcal{F}:\mathcal{C}\rightarrow \mathcal{D}$ is called \textit{fully faithful} if for any objects $A,B \in \mathcal{C}$, the map \begin{equation}
		Hom_{\mathcal{C}}(A,B)\longrightarrow Hom_{\mathcal{D}}(\mathcal{F}(A),\mathcal{F}(B))
		\end{equation} is a bijection of sets.
		\item A functor $\mathcal{F}:\mathcal{C}\rightarrow \mathcal{D}$ is called \textit{essentially surjective} if for any objects $D \in \mathcal{D}$, there exists an object $A$ in $ \mathcal{C} $ such that one has an isomorphism of objects \begin{equation}
		\mathcal{F}(A)\xrightarrow{\sim}D.
		\end{equation}
	\end{enumerate}
\end{definition}
\begin{lemma} \label{Yoneda's lemma}
	(Yoneda's Lemma) The functor $h$ above is  fully faithful.
\end{lemma}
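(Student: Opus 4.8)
The plan is to produce an explicit two-sided inverse to the assignment $f \longmapsto h_f$ on morphism sets. Fix objects $U,V$ of $\mathcal{C}$; writing $Nat(h_U,h_V)$ for $Hom_{Fun(\mathcal{C}^{op},Sets)}(h_U,h_V)$, I must show that
\[
\Phi\colon Mor_{\mathcal{C}}(U,V)\longrightarrow Nat(h_U,h_V),\qquad f\longmapsto h_f
\]
is a bijection. The candidate inverse $\Psi$ will send a natural transformation $\alpha\colon h_U\Rightarrow h_V$ to the element
\[
\Psi(\alpha)\ :=\ \alpha_U(\mathrm{id}_U)\ \in\ h_V(U)=Mor_{\mathcal{C}}(U,V),
\]
obtained by evaluating the component $\alpha_U\colon h_U(U)\to h_V(U)$ on $\mathrm{id}_U\in h_U(U)=Mor_{\mathcal{C}}(U,U)$. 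The proof then reduces to checking $\Psi\circ\Phi=\mathrm{id}$ and $\Phi\circ\Psi=\mathrm{id}$.

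For the first identity, given $f\in Mor_{\mathcal{C}}(U,V)$ one reads off directly from the definition of $h_f$ that $\Psi(\Phi(f))=(h_f)_U(\mathrm{id}_U)=f\circ \mathrm{id}_U=f$, so this direction is immediate. The substantive step is the second identity: I claim that for any $\alpha$, setting $g:=\alpha_U(\mathrm{id}_U)$ one has $\alpha=h_g$. This is where naturality of $\alpha$ does the work. For an arbitrary object $X$ and an arbitrary $\varphi\in h_U(X)=Mor_{\mathcal{C}}(X,U)$, I would apply the naturality square of $\alpha$ associated to the morphism $\varphi\colon X\to U$: since both vertical maps of that square are precomposition with $\varphi$, chasing $\mathrm{id}_U\in h_U(U)$ around the square yields $h_V(\varphi)\big(\alpha_U(\mathrm{id}_U)\big)=\alpha_X\big(h_U(\varphi)(\mathrm{id}_U)\big)$, i.e. $g\circ\varphi=\alpha_X(\varphi)$. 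Hence $\alpha_X=h_g(X)$ for every $X$, so $\alpha=h_g=\Phi(\Psi(\alpha))$. Since this holds for all $U,V$, the map $\Phi$ is a bijection of sets, which is precisely the assertion that $h$ is fully faithful.

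I expect the only point requiring genuine care to be the bookkeeping of contravariance: one must make sure that the arrow used to test naturality is $\varphi$ viewed as a morphism $X\to U$ in $\mathcal{C}$ (so that $h_U(\varphi)$ and $h_V(\varphi)$ are precomposition, not postcomposition, with $\varphi$), and that $\mathrm{id}_U$ is the correct element to feed into the square. One could alternatively deduce the lemma from the general form of Yoneda's lemma, namely a bijection $Nat(h_U,\mathcal{F})\cong \mathcal{F}(U)$ natural in $\mathcal{F}$, specialized to $\mathcal{F}=h_V$; but since that statement is established by exactly the computation above, it is cleanest to argue it directly.
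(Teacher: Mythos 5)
Your proof is correct: it is the standard Yoneda argument, with the inverse $\Psi(\alpha)=\alpha_U(\mathrm{id}_U)$ and the naturality square at $\varphi\colon X\to U$ chased on $\mathrm{id}_U$ to force $\alpha=h_{\Psi(\alpha)}$, and you handle the contravariance (both vertical maps being precomposition with $\varphi$) correctly. The paper itself states the lemma without proof, deferring to the references, so there is nothing to compare against; your argument is exactly the one those sources give and fills the gap cleanly.
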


\begin{remark} \label{rmks on Yoneda's lemma}
	\begin{enumerate}
		\item []
		\item Yoneda's lemma implies that the functor $h$ serves as an embedding (sometimes it is also called \textit{Yoneda's embedding}), and hence $h_U$ determines $U$ up to a unique isomorphism. Therefore, one can recover any object $U$ in $\mathcal{C}$ by just knowing all possible morphisms into $U$. In the case of the category $Sch_{\mathbb{C}}$ of $\mathbb{C}$-schemes, for instance, it is enough to study the restriction of this functor to the full subcategory $Aff_{\mathbb{C}}$  of affine $\mathbb{C}$-schemes, in order to recover the scheme $U$.
		\item Thanks to the Yoneda's embedding, one can also realize some algebro-geometric objects (like schemes, stacks, derived "spaces", etc...) as \textit{a certain functor} in addition to standard ringed-space formulation. We have the following enlightening diagram \cite{Vezz2} encoding such a functorial interpretation: 
		\begin{center}
			\begin{tikzpicture}
			\matrix (m) [matrix of math nodes,row sep=2em,column sep=4em,minimum width=2 em] {
				Sch^{op}\cong CAlg_k   & Sets  \\
				&  Grpds \\
				cdga_k^{<0} &  Ssets \\};
			\path[-stealth]
			(m-1-1) edge  node [left] {{\small simplicial enrichments}} (m-3-1)
			edge  node [above] {{\small Schemes}} (m-1-2)
			(m-1-1) edge  node [below] {} node [below] {{\small stacks}} (m-2-2)
			(m-1-1) edge  node [below] {} node [below] {{\small n-stacks}} (m-3-2)
			(m-3-1) edge  node  [below] {{\small derived stacks}} (m-3-2)
			
			(m-1-2) edge  node [right] {{\small 2-categorical refinements}} (m-2-2)
			(m-2-2) edge  node [right] {{\small higher categorical refinements}} (m-3-2);
			\end{tikzpicture}
		\end{center} One way of interpreting this diagram is as follows: In the case of schemes (stacks resp.), for instance, such a functorial description implies that  points of a scheme (a stack resp.) $X$ form a \textit{set} (a \textit{groupoid} resp.). These kind of interpretations, in fact, suggest the name "functor of points".
		\item The bad news is that not all functors $ \mathcal{F}: \mathcal{C}^{op} \longrightarrow \ Sets $ are of the form $h_U$ for some $U$ in a general set-up. In other words, $h$ is \textit{not} essentially surjective in general. This in fact leads to the following definition: 
	\end{enumerate}
\end{remark}

\begin{definition}
	A functor $ \mathcal{F}: \mathcal{C}^{op} \longrightarrow \ Sets $ is called \textit{representable} if there exists $\mathcal{M}\in Ob(\mathcal{C})$ such that we have a natural isomorphism $\mathcal{F}\Leftrightarrow h_\mathcal{M}.$ That is, \begin{equation}
	\mathcal{F} = Mor_{Sch_{\mathbb{C}}} (\cdot, \mathcal{M}) \ \ \text{for  some} \ \mathcal{M}\in Ob(\mathcal{C}).
	\end{equation}If this is the case, then we say that \textit{$\mathcal{F}$ is represented by $\mathcal{M}$}.  In the case of moduli theory, $\mathcal{M}$ is then called a \textbf{\textit{fine moduli space}}. In the next section, we shall investigate the properties of $\mathcal{M}$. 
\end{definition}
\section{Moduli theory in functorial perspective}
\textit{A moduli problem} is a problem of constructing a classifying space (or \textit{a moduli space} $\mathcal{M}$) for certain geometric objects (such as manifolds, algebraic varieties, vector bundles etc...) up to their intrinsic symmetries. In other words, a moduli space serves as a solution space of a given moduli problem of interest. In general, the set of isomorphism classes of objects that we would like to classify may not be able to provide a sufficient information to encode  geometric properties of the moduli space itself. Therefore, we expect a moduli space to behave well enough to capture the underlying geometry. Thus, this expectation leads to the following wish-list for $\mathcal{M}$ to be declared as a "fine" moduli space:
\begin{enumerate}
	\item  $\mathcal{M}$ is supposed to serve as a \textit{parameter space} in a sense that there must be a one-to-one correspondence between the points of  $\mathcal{M}$ and the \textit{set} of isomorphism classes of objects to be classified:
	\begin{equation}
	\{points \ of \ \mathcal{M}\} \leftrightarrow \{isomorphism \ classes \ of \ objects \ in \ \mathcal{C} \}
	\end{equation}
	\item One ensures the existence of \textit{universal classifying object}, say $\mathcal{T}$, through which all other objects parametrized by $\mathcal{M}$ can also be reconstructed. This, in fact, makes the moduli space $\mathcal{M}$ even more sensitive to the behavior of "families" of objects on any base object $B$. It is manifested by a certain representative morphism $B\rightarrow \mathcal{M}$. That is, for any  family  \begin{equation}
	\pi: X \longrightarrow B
	\end{equation}   parametrized by some base scheme $ B$ where $$ X:= \{X_b \in Ob(\mathcal{C}) : \pi^{-1}(b)=X_b, \ b\in B \}, $$ there exits a unique morphism $f:B \rightarrow \mathcal{M}$ such that one has the following fibered product diagram: 
	\begin{equation}
	\begin{tikzpicture}
	\matrix (m) [matrix of math nodes,row sep=3em,column sep=4em,minimum width=2em] {
		X  & \mathcal{T} \\
		B & \mathcal{M}\\};
	\path[-stealth]
	(m-1-1) edge  node [left] {{\small $ \pi $}} (m-2-1)
	(m-1-1.east|-m-1-2) edge  node [above] {{\small $  $}} (m-1-2)
	(m-2-1.east|-m-2-2) edge  node [below] {} node [below] {{\small $ f $}} (m-2-2)
	(m-1-2) edge  node [right] {{\small $  $}} (m-2-2);
	\end{tikzpicture}
	\end{equation}where $X=B \times_{\mathcal{M}} \mathcal{T}$. That is, the family $X$ can be uniquely obtained by pulling back the universal object $\mathcal{T}$ along the morphism $f.$ 
\end{enumerate}
For an accessible overview, see \cite{BenZ}. Relatively complete treatments can be found in \cite{Hoskins,Neumann}.\begin{remark}
	More formally, \textit{a family over a base $B$} is a scheme $X$ together with a morphism $\pi: X \rightarrow B$ of schemes where for each (closed point) $b\in B$ the fiber  $X_b$ is defined as fibered product \begin{equation}
	\begin{tikzpicture}
	\matrix (m) [matrix of math nodes,row sep=3em,column sep=4em,minimum width=2em] {
		X_b= \{b \} \times_{B} X  & X \\
		\{b \} & B\\};
	\path[-stealth]
	(m-1-1) edge  node [left] {{\small $  $}} (m-2-1)
	(m-1-1.east|-m-1-2) edge  node [above] {{\small $  $}} (m-1-2)
	(m-2-1.east|-m-2-2) edge  node [below] {} node [below] {{\small $ \imath $}} (m-2-2)
	(m-1-2) edge  node [right] {{\small $ \pi $}} (m-2-2);
	\end{tikzpicture} 
	\end{equation} where $\imath: \{b \} \hookrightarrow B $ is the usual inclusion map.
\end{remark}

 In the language of category theory, on the other hand, a moduli problem can be formalized as a certain functor \begin{equation}
\mathcal{F}:  \mathcal{C}^{op} \longrightarrow Sets,
\end{equation}
which is called \textit{\textbf{a moduli functor}} where $ \mathcal{C}^{op} $ denotes the \textit{opposite} category of the category $\mathcal{C}$, and $ Sets $ is the category of sets. In other words, it is just a \textit{contravariant functor} from the category $\mathcal{C}$ to $Sets$. In order to make the argument more transparent, we take $\mathcal{C}$ to be the category $Sch_{\mathbb{C}}$ of $\mathbb{C}$-schemes unless otherwise stated. Note that 
for each  $\mathbb{C}$-scheme $ U \in Sch $, \ $\mathcal{F}(U)$ is the \textit{set} of isomorphism classes (of families) parametrized by $U$. For each morphism $f:U \rightarrow V$ of schemes, we have a morphism $\mathcal{F}(f): \mathcal{F}(V) \rightarrow \mathcal{F}(U)$ of sets. 

\begin{example}
	Given a scheme $U$, one can define $\mathcal{F}(U):=S(U)/\sim$ where $S(U)$ is \textit{the set of families over the base scheme $U$} \begin{equation*}
	S(U):= \bigg\{ X \rightarrow U: X \text{ is a scheme over } U, \text{each fiber $ X_u $ is $ C_g \ \forall u\in U$} \bigg\}
	\end{equation*} where $ C_g $ is a smooth, projective, algebraic curve of genus g. We say that two families $\pi: X \rightarrow U$ and $\pi': Y \rightarrow U$ over $U$ are \textit{equivalent} if and only if there exists an isomorphism $f: X \xrightarrow{\sim} Y$ of schemes such that the following diagram commutes:
	\begin{equation}
	\begin{tikzpicture}
	\matrix (m) [matrix of math nodes,row sep=2em,column sep=3em,minimum width=1em] {
		X  & & Y \\
		& U &\\};
	\path[-stealth]
	(m-1-1) edge  node [left] {{\small $ \pi $}} (m-2-2)
	(m-1-1.east|-m-1-3) edge  node [above] {{\small $ f $}} (m-1-3)
	(m-1-3) edge  node [right] {{\small $ \pi' $}} (m-2-2);
	\end{tikzpicture}
	\end{equation}

\noindent On morphisms $\phi: U \rightarrow V$, on the other hand, we have
\begin{equation}
\mathcal{F}(\phi): \mathcal{F}(V) \rightarrow \mathcal{F}(U), \ \ [X \rightarrow V] \longmapsto [U \times_V X \rightarrow U]
\end{equation} where $ U \times_V X $ is the fibered product given by pulling back the family $ X \rightarrow V $ along the morphism $\phi: U \rightarrow V$:
\begin{equation}
\begin{tikzpicture}
\matrix (m) [matrix of math nodes,row sep=2em,column sep=3em,minimum width=1em] {
	U \times_V X  & X \\
	U & V\\};
\path[-stealth]
(m-1-1) edge  node [left] {{\small $ $}} (m-2-1)
(m-1-1.east|-m-1-2) edge  node [above] {{\small $  $}} (m-1-2)
(m-2-1.east|-m-2-2) edge  node [below] {} node [below] {{\small $ f $}} (m-2-2)
(m-1-2) edge  node [right] {{\small $  $}} (m-2-2);
\end{tikzpicture}
\end{equation}
\end{example}
With the above formalism in hand, the existence of a fine moduli space, therefore, corresponds to the \textit{representability} of the moduli functor $\mathcal{F}$ in the sense that 
\begin{equation}
\mathcal{F} = Mor_{Sch_{\mathbb{C}}} (\cdot, \mathcal{M}) \ \ for \ some \ \mathcal{M}\in Sch_{\mathbb{C}}.
\end{equation}If this is the case, then we say that \textit{$\mathcal{F}$ is represented by $\mathcal{M}$}.
\newpage
\begin{remark} Let  $\mathcal{F}: \mathcal{C}^{op} \longrightarrow Sets$ be a moduli functor represented by an object $M$, then one can recast the desired properties of being a "fine" moduli space 
	as follows:
	\begin{enumerate}
		\item Take $B:=spec(\mathcal{\mathbb{C}})= \{ \ast \}$, then from the representability we have \begin{equation}
		\mathcal{F}(\{ \ast \}) \cong h_M(\{ \ast \})=Mor_{\mathcal{C}}(\{ \ast \}, M). 
		\end{equation} Note that the RHS is just the set of (closed) points of $ M$, and LHS is the set of corresponding isomorphism classes.
		\item When $B:=M$, then we get an isomorphism 
		\begin{equation}
		\mathcal{F}(M) \cong h_M(M)=Mor_{\mathcal{C}}(M, M), 
		\end{equation} which allows us to define the universal object $\mathcal{T}$ to be the object corresponding to the identity morphism $id_M \in Mor_{\mathcal{C}}(M, M).$ 
	\end{enumerate}
	
\end{remark}
\noindent These observations yield the following corollary:
\begin{corollary}
	If $\mathcal{F}: \mathcal{C}^{op} \longrightarrow  Sets$ is a moduli functor represented by an object $\mathcal{M}$ in $\mathcal{C}$, then there exists an one-to-one correspondence between the set $\{X\rightarrow B \}$ of (equivalences classes of) families  and $Mor_{\mathcal{C}}(B,\mathcal{M})$. That is, 	\begin{equation}
	\{X\rightarrow B\}/\sim \ \longleftrightarrow Mor_{\mathcal{C}}(B,\mathcal{M}). 
	\end{equation} Furthermore, for a morphism $f:B \rightarrow \mathcal{M}$ corresponding to the equivalence class $ \big[X \rightarrow B\big]$ of the family $ X \rightarrow B, $ we have 
	\begin{equation}
	\big[X \rightarrow B\big] = \big[B \times_{\mathcal{M}} \mathcal{T} \rightarrow B \big].
	\end{equation}
\end{corollary}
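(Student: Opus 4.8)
The plan is to unpack the definition of representability and then exploit naturality of the representing isomorphism. By hypothesis there is a natural isomorphism $\Phi\colon \mathcal{F}\xrightarrow{\ \sim\ } h_{\mathcal{M}}$ in $Fun(\mathcal{C}^{op},Sets)$, so for every object $B\in\mathcal{C}$ we obtain a bijection of sets $\Phi_B\colon \mathcal{F}(B)\xrightarrow{\sim} Mor_{\mathcal{C}}(B,\mathcal{M})$. Since, by the very construction of the moduli functor, $\mathcal{F}(B)$ \emph{is} the set $\{X\to B\}/\!\sim$ of equivalence classes of families over $B$, the map $\Phi_B$ is already the claimed bijection $\{X\to B\}/\!\sim\ \longleftrightarrow\ Mor_{\mathcal{C}}(B,\mathcal{M})$. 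Thus the first assertion is nothing but the content of the previous Remark, read at an arbitrary base $B$ rather than only at $B=\{\ast\}$ and $B=\mathcal{M}$.

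For the second assertion, recall from that Remark that the universal object is defined by $[\mathcal{T}]:=\Phi_{\mathcal{M}}^{-1}(id_{\mathcal{M}})\in\mathcal{F}(\mathcal{M})$, the family over $\mathcal{M}$ classified by the identity. Fix a morphism $f\colon B\to\mathcal{M}$ and let $[X\to B]:=\Phi_B^{-1}(f)$ be the family it classifies. Applying naturality of $\Phi$ to $f$ (viewed as a morphism of $\mathcal{C}$, so that the induced maps run from the $\mathcal{M}$-row to the $B$-row) gives the commuting square
\begin{equation}
\begin{tikzpicture}
\matrix (m) [matrix of math nodes,row sep=3em,column sep=4em,minimum width=2em] {
	\mathcal{F}(\mathcal{M})  & Mor_{\mathcal{C}}(\mathcal{M},\mathcal{M}) \\
	\mathcal{F}(B) & Mor_{\mathcal{C}}(B,\mathcal{M})\\};
\path[-stealth]
(m-1-1) edge  node [left] {{\small $ \mathcal{F}(f) $}} (m-2-1)
(m-1-1.east|-m-1-2) edge  node [above] {{\small $ \Phi_{\mathcal{M}} $}} (m-1-2)
(m-2-1.east|-m-2-2) edge  node [below] {{\small $ \Phi_{B} $}} (m-2-2)
(m-1-2) edge  node [right] {{\small $ g\mapsto g\circ f $}} (m-2-2);
\end{tikzpicture}
\end{equation}
Now chase $[\mathcal{T}]\in\mathcal{F}(\mathcal{M})$ around the square. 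Along the top and then the right edge one gets $id_{\mathcal{M}}\circ f=f$. Along the left and then the bottom edge one gets $\Phi_B\big(\mathcal{F}(f)([\mathcal{T}])\big)$, and since $\mathcal{F}$ sends a morphism to the pullback of families, $\mathcal{F}(f)([\mathcal{T}])=[B\times_{\mathcal{M}}\mathcal{T}\to B]$. Commutativity then forces $\Phi_B\big([B\times_{\mathcal{M}}\mathcal{T}\to B]\big)=f=\Phi_B\big([X\to B]\big)$, and injectivity of $\Phi_B$ yields $[X\to B]=[B\times_{\mathcal{M}}\mathcal{T}\to B]$, as claimed.

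I expect the only real care needed is bookkeeping with the contravariance: one must apply naturality to the correct arrow, namely $f\colon B\to\mathcal{M}$, and remember that $h_{\mathcal{M}}(f)$ is \emph{pre}composition $g\mapsto g\circ f$ as fixed in the Definition of $h$, so that the identity is sent to $f$ itself. One should also note that $\mathcal{T}$ and all the bijections $\Phi_B$ depend on the single chosen isomorphism $\Phi$, and once that is fixed everything is consistent. The remaining ingredients — that $\mathcal{F}(B)$ is by definition the set of equivalence classes of families and that $\mathcal{F}$ acts on morphisms by fibre products — are built into the set-up of the moduli functor and require no further argument.
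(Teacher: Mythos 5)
Your proposal is correct and follows essentially the same route as the paper, which presents the corollary as an immediate consequence of the preceding remark (the bijection $\Phi_B$ from representability, and $\mathcal{T}$ defined as the preimage of $id_{\mathcal{M}}$) without writing out the naturality square. Your explicit diagram chase of $[\mathcal{T}]$ around the square for $f\colon B\to\mathcal{M}$, together with the observation that $h_{\mathcal{M}}(f)$ is precomposition so $id_{\mathcal{M}}\mapsto f$, is exactly the argument the paper leaves implicit.
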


In many cases, however, a moduli functor is \textit{not} representable in the category $Sch$ of schemes. This is the place where the notion of a \textit{stack} comes into play. In that situation, one can still make sense of the notion of a  moduli space in a weaker sense. This version, namely \textit{a coarse moduli space}, is still efficient enough to encode the isomorphism classes of points. That is, it has the correct points, and captures the geometry of moduli space. However, the sensitivity on the behavior of arbitrary families is no longer available. In other words, a coarse moduli space may not be able to distinguish two non-isomorphic families in many cases. Hence, the classification in this "family-wise" level is by no means possible. To elaborate the last statement, we first introduce the formal definition of a so-called \emph{coarse moduli space}, and then we shall provide two important examples: $(i)$ \textit{the moduli problem of classifying vector bundles of  fixed rank over an algebraic curve over a field $k$} \cite{Neumann}, and $(ii)$ \textit{the moduli of elliptic curves} \cite{BenZ,Hain1}.

\begin{definition}  Let $\mathcal{C}:=Sch_{\mathbb{C}}$ for the sake of simplicity.
	\textit{A coarse moduli space} for a moduli functor $\mathcal{F}: \mathcal{C}^{op} \longrightarrow  Sets$ consists of a pair $ (M,\psi) $ where   $M$ is an object in $\mathcal{C}$ , and  $\psi: \mathcal{F}\rightarrow h_M$ is a natural transformation such that \begin{enumerate}
		\item $\psi_{spec(\mathbb{C})}: \mathcal{F}(spec(\mathbb{C})) \rightarrow h_M(spec(\mathbb{C}))$ is a bijection of sets.
		\item Such a pair $ (M,\psi) $ satisfies the following \textit{universal property}: For any scheme $N$ and any natural transformation $\phi: \mathcal{F}\rightarrow h_N$, there exists a unique morphism $f:M \rightarrow N$ such that the following diagram commutes:
		\begin{equation}
		\begin{tikzpicture}
		\matrix (m) [matrix of math nodes,row sep=2em,column sep=3em,minimum width=1em] {
			\mathcal{F}  & h_M \\
			& h_N \\};
		\path[-stealth]
		(m-1-1) edge  node [left] {{\small $ \phi $}} (m-2-2)
		edge  node [above] {{\small $ \psi $}} (m-1-2)
		(m-1-2) edge [dashed,->] node  [right] {{\small $\exists h_f $}} (m-2-2);
		\end{tikzpicture}
		\end{equation}
	\end{enumerate}
\end{definition}
\begin{remark} Here, $h_f: h_M \rightarrow h_N$ is the associated natural transformation of functors \ref{defn_Yonede functor h}.
	Second condition also implies that if it exists, a coarse moduli space $M$ for a moduli functor $\mathcal{F}$ is unique up to a unique isomorphism.
\end{remark}
\newpage
\begin{proposition}
	\cite{Hoskins} Let $ (M,\psi) $ be a coarse moduli space for a moduli functor $\mathcal{F}: \mathcal{C}^{op} \longrightarrow  Sets$ where $M$ is a scheme and $\psi: \mathcal{F}\rightarrow h_M$ is the corresponding natural transformation. Then $ (M,\psi) $ is a fine moduli space if and only if the following conditions hold: \begin{enumerate}
		\item There exists a family $\mathcal{T} \rightarrow M$ such that $\psi_M (\mathcal{T})=id_M \in Mor_{\mathcal{C}}(M,M)$. 
		\item For families $X\rightarrow B$ and $Y\rightarrow B$ on a base scheme $B$, \begin{equation}
		[X\rightarrow B]=[Y\rightarrow B] \Longleftrightarrow \psi_B(X)= \psi_B (Y).
		\end{equation}
	\end{enumerate}
\end{proposition}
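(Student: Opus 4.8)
The plan is to unwind the definitions. By definition, the pair $(M,\psi)$ is a \emph{fine} moduli space precisely when $\psi$ exhibits $M$ as representing $\mathcal{F}$, i.e.\ when $\psi\colon\mathcal{F}\to h_M$ is a natural \emph{isomorphism}; concretely this means that every component $\psi_B\colon\mathcal{F}(B)\to h_M(B)=Mor_{\mathcal{C}}(B,M)$ is a bijection of sets. So the entire proof reduces to showing that conditions $(1)$–$(2)$ are equivalent to the bijectivity of all the $\psi_B$, and the universal object will be read off as the family $\mathcal{T}$ of $(1)$.

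First I would treat the forward implication. Assuming $\psi$ is a natural isomorphism, apply bijectivity of $\psi_M$: there is a unique $\mathcal{T}\in\mathcal{F}(M)$ with $\psi_M(\mathcal{T})=id_M\in Mor_{\mathcal{C}}(M,M)$, which is exactly condition $(1)$. Condition $(2)$ is then formal: the implication $[X\to B]=[Y\to B]\Rightarrow\psi_B(X)=\psi_B(Y)$ simply says $\psi_B$ is a well-defined map on $\mathcal{F}(B)$, and the reverse implication is injectivity of $\psi_B$.

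Next, for the converse, assume $(1)$ and $(2)$ and check that each $\psi_B$ is bijective. Injectivity is exactly the ``$\Leftarrow$'' half of $(2)$. For surjectivity, fix $f\in Mor_{\mathcal{C}}(B,M)$ and form the pulled-back family $X:=B\times_{M}\mathcal{T}\to B$, where $\mathcal{T}\to M$ is the family supplied by $(1)$; by the definition of $\mathcal{F}$ on morphisms (cf.\ the Example above) one has $\mathcal{F}(f)\big([\mathcal{T}]\big)=[X\to B]$. Now invoke naturality of $\psi$ for the morphism $f\colon B\to M$, which gives a commuting square relating $\mathcal{F}(f)$ to $h_M(f)=(\,\cdot\,)\circ f$; tracing $\mathcal{T}$ around it yields
\begin{equation*}
\psi_B(X)=\psi_B\big(\mathcal{F}(f)(\mathcal{T})\big)=h_M(f)\big(\psi_M(\mathcal{T})\big)=\psi_M(\mathcal{T})\circ f=id_M\circ f=f,
\end{equation*}
so $\psi_B$ is onto. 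Hence every $\psi_B$ is a bijection, $\psi$ is a natural isomorphism, and $\mathcal{F}$ is represented by $M$ with universal family $\mathcal{T}$ — that is, $(M,\psi)$ is a fine moduli space.

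The argument is essentially formal, so I do not expect a genuine obstacle; the one place that needs a little care is the surjectivity step, where I lean on the compatibility — built into the very definition of the moduli functor — between the transition map $\mathcal{F}(f)$ and pulling back families along $f$, together with correct bookkeeping of the naturality square for $\psi$. It is worth noting in passing that the first axiom in the definition of a coarse moduli space (bijectivity of $\psi$ on $spec(\mathbb{C})$-points) is subsumed here and plays no independent role, being automatic once all $\psi_B$ are bijective.
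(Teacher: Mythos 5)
Your proof is correct and follows the intended route: the paper itself only remarks that the proposition ``follows directly from the definition of a fine moduli space,'' and your argument is exactly that unwinding, with the details (injectivity from condition (2), surjectivity via the naturality square applied to $\mathcal{T}$ and the pullback $B\times_M\mathcal{T}$) properly filled in. The one dependency worth flagging explicitly, which you do note, is that the identity $\mathcal{F}(f)([\mathcal{T}])=[B\times_M\mathcal{T}\to B]$ uses the convention that the moduli functor acts on morphisms by pullback of families.
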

\begin{proof}
	It follows directly from the definition of a fine moduli space.
\end{proof}

\subsection{Moduli of vector bundles of fixed rank}
We would like to investigate the moduli problem of classifying vector bundles of fixed rank over a smooth, projective algebraic curve $X$ of genus $ g $ over a field $ k $ with char $k=0$.   We define the corresponding moduli functor 
\begin{equation}
\mathcal{F}_X^n:  Sch_{\mathbb{C}}^{op} \longrightarrow Sets
\end{equation} as follows: To each object $U$ in $Sch_{\mathbb{C}}$, $ \mathcal{F}_X^n $ assigns   the set $ \mathcal{F}_X^n(U) $ of isomorphism classes of families of vector bundles of rank $n$ on $X$ parametrized by $U$. That is,
\begin{equation*}
\mathcal{F}_X^n(U)= \big\{ E\rightarrow U \times_{spec \mathbb{C}} X : E \text{ is a vector bundle of rank n }\big\}/\sim
\end{equation*} Here, we say that two vector bundles $\pi: E \rightarrow U \times_{spec \mathbb{C}} X$ and $\pi': E' \rightarrow U \times_{spec \mathbb{C}} X$ over $U \times_{spec \mathbb{C}} X$ are \textit{equivalent} if and only if there exists an isomorphism $f: E \xrightarrow{\sim} E'$ of vector bundles such that the following diagram commutes:
\begin{equation}
\begin{tikzpicture}
\matrix (m) [matrix of math nodes,row sep=2em,column sep=3em,minimum width=1em] {
	E  & & E' \\
	&  U \times_{spec\mathbb{C}} X &\\};
\path[-stealth]
(m-1-1) edge  node [left] {{\small $ \pi $}} (m-2-2)
(m-1-1.east|-m-1-3) edge  node [above] {{\small $ f $}} (m-1-3)
(m-1-3) edge  node [right] {{\small $ \pi' $}} (m-2-2);
\end{tikzpicture}
\end{equation}
To each morphism $f:U\rightarrow V$ in $Sch_{\mathbb{C}}$,  \ $ \mathcal{F}_X^n $ assigns the map of vector bundles  \begin{equation}
\mathcal{F}_X^n(f): \mathcal{F}_X^n(V)\rightarrow\mathcal{F}_X^n(U)
\end{equation} which is defined by pulling back of the vector bundle $E\rightarrow V \times_{spec\mathbb{C}} X$ along the morphism $f \times id_X$. Note that $U \times_{spec \mathbb{C}} X$ is just the usual direct product $U \times X$ with the projection maps $pr_1: U \times X \rightarrow U$ and $pr_2: U \times X \rightarrow X$ such that \begin{equation}
\begin{tikzpicture}
\matrix (m) [matrix of math nodes,row sep=3em,column sep=4em,minimum width=2em] {
	U \times_{spec \mathbb{C}} X  & X \\
	U & spec \mathbb{C}\\};
\path[-stealth]
(m-1-1) edge  node [left] {{\small $ pr_1 $}} (m-2-1)
(m-1-1.east|-m-1-2) edge  node [above] {{\small $ pr_2 $}} (m-1-2)
(m-2-1.east|-m-2-2) edge  node [below] {} node [below] {{\small $  $}} (m-2-2)
(m-1-2) edge  node [right] {{\small $  $}} (m-2-2);
\end{tikzpicture}
\end{equation} Now, we would like to show that $\mathcal{F}_X^n$ can not be representable by some scheme $M.$
\vspace{5pt}

\noindent \textbf{Claim:} \textit{$ \mathcal{F}_X^n $ is not representable in $Sch_{\mathbb{C}}$.}
\begin{proof}
	Assume that $ \mathcal{F}_X^n $ is representable by a scheme $M$. That is, we have a natural isomorphism \begin{equation}
	\mathcal{F}_X^n \cong h_M.
	\end{equation} Let $U$ be a scheme and $ E\in\mathcal{F}_X^n (U)$. Then, we have a vector bundle $E\rightarrow U \times X$ of rank $n$. Let $L$ be a line bundle over $U$, then we can define \textit{the induced bundle $ pr_1^* L $ on $U \times X$} by pulling back $L$ along the projection map $pr_1$. Therefore, we obtain a particular vector bundle \begin{equation}
	E':= E \otimes pr_1^* L \longrightarrow U \times X.
	\end{equation} Indeed, $E'$ is \textit{a twisted bundle} where each fiber of $E'$ is obtained by multiplying each fiber of $E$ by a scalar. Hence, we produce a new vector bundle $E'$   by "twisting" $E$ such that $E'$ is \textit{not} (globally) isomorphic  to $E$. Let $\{U_i\}$ be a local trivializing cover of $U$ such that $L|_{U_i}$ is trivial. Then it follows from the definition of $E'$ that \begin{equation}
	E' |_{U_i \times X} \cong E |_{U_i \times X} \ \ \forall i.
	\end{equation} As  $ \mathcal{F}_X^n $ is representable by a scheme $M$, there are morphisms $f_E: U \rightarrow M$ and $f_{E'}: U \rightarrow M$ in $Mor_{Sch_{\mathbb{C}}}(U,M)$ corresponding to $E$ and $E'$ respectively such that \begin{equation}
	f_E |_{U_i \times X} = f_{E'} |_{U_i \times X} \ \ \forall i.
	\end{equation} Since $h_M$ is a sheaf, it follows from the gluing axiom that all such morphisms are glued together nicely such that  \begin{equation}
	f_E |_{U \times X} = f_{E'} |_{U \times X}.
	\end{equation} But, from the representability of the functor $ \mathcal{F}_X^n $, it implies that  \begin{equation}
	E' |_{U \times X} \cong E |_{U \times X}.
	\end{equation} This yields a contradiction to $E\ncong E'$.
\end{proof}

\begin{remark}
	The reason behind the failure of the representability of $ \mathcal{F}_X^n $ is that vector bundles have a number of non-trivial automorphisms, for instance, induced by a scalar multiplication as above. This example, in fact, may provide an important insight why a generic moduli problem is destined to be non-representable in the category of schemes. In many cases, the main source of non-representability problem turns out to be \textit{the existance of non-trivial automorphisms for the moduli problem of interest. }
\end{remark}

\subsection{Moduli of elliptic curves}

In this section, we study the moduli space of elliptic curves, and try to show how the existence of non-trivial automorphisms again leads to the non-representability of the corresponding moduli functor. The study of such a moduli problem is indeed a classical topic, and further discussion can be found elsewhere. For an introduction, we refer to \cite{BenZ}. More detailed treatment (possibly with different approaches) can be found, for instance, in \cite{Harris and Morrison, Hain1, Hain2}.

\begin{definition}
	We first recall that one can define the notion of \textit{an elliptic curve over $\mathbb{C}$} in a number of equivalent ways. \textbf{An elliptic curve over $\mathbb{C}$} is defined to be either of the following objects:\begin{enumerate}
		\item A Riemannian surface $\Sigma$ of genus 1 with a choice of a point $p \in \Sigma.$
		\item A quotient space $\mathbb{C}/\Lambda$ where $\Lambda= \omega_1 \cdot \mathbb{Z} \oplus \omega_2 \cdot \mathbb{Z}$ is a rank 2 lattice in $\mathbb{C}$ for  each $\omega_i \in \mathbb{C}$.
		\item A smooth algebraic curve of genus 1 and degree 3 in $\mathbb{P}^2_{\mathbb{C}}$.
	\end{enumerate} 
	
\end{definition}

We actually use the second characterization of an elliptic curve, namely the one given in terms of lattices. With this interpretation in hand, the study of the moduli of elliptic curves boils down to the study of integer lattices of full rank  in $\mathbb{C}$.

\paragraph{$SL_2(\mathbb{Z})$-action on the upper half plane and the fundamental domain.} Recall that the Lie group  $SL_2(\mathbb{Z})$ acts on the upper half plane $\mathbb{H}=\{z\in \mathbb{C} : im(z)>0 \}$ as follows:  \begin{equation*}
{\small \begin{bmatrix}
	a & b \\ c & d
	\end{bmatrix}} \cdot z:= \dfrac{az+b}{cz+d}
\ \ \ \text{for all } {\small \begin{bmatrix}
	a & b \\ c & d
	\end{bmatrix}}  \in SL_2(\mathbb{Z}) \text{ and } \forall z \in \mathbb{H}. \end{equation*}  It is clear from the definition of the action that both $\pm I$ act on $\mathbb{H}$ in the same way, and hence we will concentrate on the action of $PSL_2(\mathbb{Z}):= SL_2(\mathbb{Z})/\{\pm I\}$ on $\mathbb{H}$. Then \textit{the fundamental domain $\Gamma := \mathbb{H}/PSL_2(\mathbb{Z})$} of this action turns out to be the set \begin{equation*}
\Gamma= \Big\{z : -\frac{1}{2}\leq Re(z) < \frac{1}{2}, \ |z|>1\Big\} \cup \Big\{z : -\frac{1}{2}\leq Re(z) \leq 0, \ \ |z|=1  \Big\}. 
\end{equation*}

\begin{figure}[h]
	
	\centering
	\includegraphics[width=0.4\linewidth]{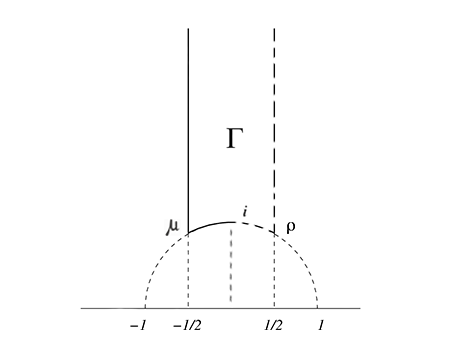}
	\caption{The fundamental domain $\Gamma$  where $\mu=e^{2\pi i/3}$ and $\rho=e^{2\pi i/6}$.}
	\label{fig:fundamental-domain1}
\end{figure}
It is very well known that $\Gamma$ is in fact a Riemann
surface whose points correspond to the isomorphism classes of lattices of full rank in $\mathbb{C}$ up to  homotheties. Note also that any lattice $\Lambda=\omega_1\mathbb{Z} \oplus \omega_2\mathbb{Z}$ is isomorphic to a "normalized" lattice\begin{equation}
\Lambda_{\tau}:= 1 \cdot \mathbb{Z} \oplus \tau \cdot \mathbb{Z} \ \text{ for some } \tau \in \mathbb{H}.
\end{equation}We say that two lattices $\Lambda_{\tau_1}= 1 \cdot \mathbb{Z} \oplus \tau_1 \cdot \mathbb{Z}$ and $\Lambda_{\tau_2}= 1 \cdot \mathbb{Z} \oplus \tau_2 \cdot \mathbb{Z}$ with $\tau_i \in \mathbb{H}$ are  \textit{homothetic} if there exits $g \in PSL_2(\mathbb{Z})$ such that $\tau_2 = g \cdot \tau_1.$ In other words, $\mathbb{H}/PSL_2(\mathbb{Z})$ serves as \textit{a coarse moduli space} for isomorphism classes of elliptic curves $\mathbb{C}/\Lambda_{\tau}$ with $\tau \in \Gamma$. As we shall see soon, however, it turns out that the space $\Gamma$ is not sensitive enough to parametrize certain families of elliptic curves. This amounts to say that not all families of elliptic curves over some base $B$ correspond to morphisms $B\rightarrow \mathbb{H}/PSL_2(\mathbb{Z})$. Hence, $\Gamma$ fails to become a fine moduli space.

\begin{remark}
	\begin{enumerate}
		\item []
		\item The fundamental domain $\Gamma$ can also be represented as a free product of finite groups $\mathbb{Z}_2$ and $\mathbb{Z}_3$ as follows: 
		\begin{align*}
		\Gamma= & \Big \langle S:= {\small \begin{bmatrix}
			0 & -1 \\ 1 & 0
			\end{bmatrix}}, T:={\small \begin{bmatrix}
			1 & 1 \\ 1 & 0
			\end{bmatrix}}\in PSL_2(\mathbb{Z}) \ | \ S^2=(ST)^3=I \Big \rangle \nonumber \\  \cong & \ \mathbb{Z}_2 \star \mathbb{Z}_3.
		\end{align*}
		\item The action of $PSL_2(\mathbb{Z})$ on $\mathbb{H}$ is not free. Indeed, some routine computations show that \begin{equation} 
		Stab_{PSL_2(\mathbb{Z})} (\tau) \cong \begin{cases}
		\mathbb{Z}_2, &\ \tau=i  \\
		\mathbb{Z}_3, &\ \tau= \mu \ or \ \rho \\
		\{e\}, & \ else.
		\end{cases}
		\end{equation}
		\item It follows from the non-freeness of the action that one has special types of lattices, namely \textit{the square lattice} $\Lambda_i$ and the \textit{hexagonal lattice} $\Lambda_{\mu}$ (or $\Lambda_{\rho}$) such that \begin{equation} 
		Aut(\Lambda_{\tau})=\begin{cases}
		\mathbb{Z}_4, &\ \tau=i  \\
		\mathbb{Z}_6, &\ \tau= \mu \ or \ \rho.
		\end{cases}
		\end{equation} This gives rise to non-trivial automorphisms for the corresponding elliptic curves $\mathbb{C}/\Lambda_i$ and $\mathbb{C}/\Lambda_{\mu}$ by using, for instance, rotational symmetries of a square and that of a hexagon respectively.   As before,  \textit{the existance of  non-trivial automorphisms} allows us to produce some examples  which eventually  show that the corresponding moduli problem of elliptic curves can not be represented by the space  $\mathbb{H}/PSL_2(\mathbb{Z})$.
		But, we should keep in mind that it becomes \textit{the coarse moduli space.}
	\end{enumerate}
	
\end{remark}
\newpage
\paragraph{Moduli functor for the families of elliptic curves.} We define the corresponding moduli functor  
\begin{equation}
\mathcal{F}_{ell}:  Sch_{\mathbb{C}}^{op} \longrightarrow Sets, \ U \mapsto \mathcal{F}_{ell}(U)
\end{equation} as follows: Given a scheme $U$, one can define $\mathcal{F}_{ell}(U):=S_{ell}(U)/\sim$ where $S_{ell}(U)$ is the set of (continuous) families of elliptic curves over the base scheme $U$: \begin{equation}
S_{ell}(U):= \bigg\{ E \rightarrow U:  \text{each fiber $ E_u $ is $ \mathbb{C}/\Lambda_{\tau(u)} \ \forall u\in U$} \bigg\}
\end{equation} where $ \mathbb{C}/\Lambda_{\tau(u)} $ is an elliptic curve with $\tau: U \rightarrow \mathbb{H}/PSL_2(\mathbb{Z}), \ u \mapsto \tau(u)$, and $E=\bigsqcup_{u \in U}E_u$. We say that two families $\pi_E: E \rightarrow U$ and $\pi_{E'}: E' \rightarrow U$ over $U$ are \textit{equivalent} if and only if there exists an isomorphism $f: E \xrightarrow{\sim} E'$ of families such that on each fiber $E_u=\pi_E^{-1}(u)$, $f$ restricts to an automorphism of elliptic curves 
\begin{equation}
f|_{E_u}: E_u \xrightarrow{\sim} E'_u, 
\end{equation}and the following diagram commutes:
\begin{equation}
\begin{tikzpicture}
\matrix (m) [matrix of math nodes,row sep=2em,column sep=3em,minimum width=1em] {
	E  & & E' \\
	& U &\\};
\path[-stealth]
(m-1-1) edge  node [left] {{\small $ \pi_E $}} (m-2-2)
(m-1-1.east|-m-1-3) edge  node [above] {{\small $ f $}} (m-1-3)
(m-1-3) edge  node [right] {{\small $ \pi_{E'} $}} (m-2-2);
\end{tikzpicture}
\end{equation} Note that from the previous discussion, it is not hard to observe that the space $\mathbb{H}/PSL_2(\mathbb{Z})$ in fact serves as the desired coarse moduli space for the moduli functor above. We now would like to show that the moduli functor $\mathcal{F}_{ell}$ is not representable by $\mathbb{H}/PSL_2(\mathbb{Z})$.
\vspace{5pt}

\noindent \textbf{Claim:} The moduli functor $\mathcal{F}_{ell}$ is not representable by $\mathbb{H}/PSL_2(\mathbb{Z})$.

\begin{proof}
	Assume the contrary. Then, we have the following  one-to-one correspondence between two sets:
	\begin{equation} \label{correspondence}
	Mor_{Sch_{\mathbb{C}}}(U, \mathbb{H}/PSL_2(\mathbb{Z})) \cong \mathcal{F}_{ell}(U).
	\end{equation} 
	We first consider a "constant" family $E$ of elliptic curves on the interval $[0,1]$ where each fiber $E_x$ is of the form \begin{equation}
	E_x:= \mathbb{C}/\Lambda_{i} \ \text{ for all } x.
	\end{equation}	Recall that $Aut(\mathbb{C}/\Lambda_{i})\cong \mathbb{Z}_4$. Let $f$ be a non-trivial automorphism of $\mathbb{C}/\Lambda_{i}$ given as a multiplication by $i$, 
	\begin{equation}
	f: \mathbb{C}/\Lambda_{i} \rightarrow \mathbb{C}/\Lambda_{i}, \ z \mapsto iz.
	\end{equation}	Then one can identify the fibers $E_0$ and $E_1$ along the morphism $f$ so that a particular family $\mathcal{E}$ of elliptic curves over $\mathbb{S}^1$ can be obtained. Similarly, one can construct another family $\mathcal{E}'$ of elliptic curves over $\mathbb{S}^1$ by gluing the fibers $E_0$ and $E_1$ via the identity morphism. We then obtain two \textit{non-isomorphic families} $\mathcal{E}$ and $\mathcal{E}'$ of elliptic curves over $\mathbb{S}^1$ with the generic fibers being all isomorphic. That is, $ [\mathcal{E}] \neq [\mathcal{E}'] $ such that $$\mathcal{E}_x\cong \mathcal{E}_x' \cong E_x \ \text{ for all } x\in (0,1),$$ where $\mathcal{E}_x$ and $\mathcal{E}_x'$ denote the fibers of "twisted" and "trivial" families respectively. See Figure \ref{fig:families}.
	\begin{figure}[h]
		\centering
		\includegraphics[width=0.55\linewidth]{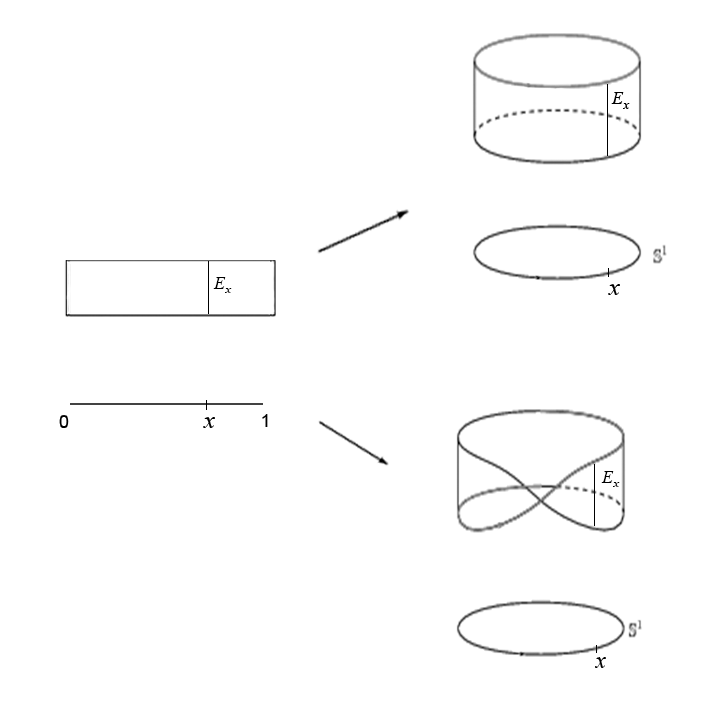}
		\caption{The "trivial" and "twisted" families of elliptic curves over $\mathbb{S}^1$ with generically isomorphic fibers.}
		\label{fig:families}
	\end{figure}
	\vspace{5pt}
	
	As $\mathcal{F}_{ell}$ is representable by $\mathbb{H}/PSL_2(\mathbb{Z})$, there are corresponding morphisms $f_{\mathcal{E}}, \ f_{\mathcal{E}'}: \mathbb{S}^1 \rightarrow \Gamma$ for $\mathcal{E}$ and $\mathcal{E}'$ respectively such that \begin{equation}
	f_{\mathcal{E}}, \ f_{\mathcal{E}'}: \mathbb{S}^1 \longrightarrow \mathbb{H}/PSL_2(\mathbb{Z}), \ s \mapsto [i].
	\end{equation}That is, each representing morphism is just the constant map. Let $\{U_k\}$ be a local trivializing cover for $\mathbb{S}^1$ such that \begin{equation}
	\pi_{\mathcal{E}}^{-1} (U_k) \cong (\mathbb{C}/\Lambda_i)  \times U_k \cong \pi_{\mathcal{E}'}^{-1} (U_k).
	\end{equation} Then the representability condition implies that the representing morphisms are locally the same as well. That is,  \begin{equation}
	f_{\mathcal{E}} |_{U_k} = f_{\mathcal{E}'} |_{U_k} \ \ \forall k.
	\end{equation} Since $h_{\mathbb{H}/PSL_2(\mathbb{Z})}=Mor_{Sch_{\mathbb{C}}}( - ,\mathbb{H}/PSL_2(\mathbb{Z}))$ is a sheaf, it follows from the gluing axiom that all such morphisms are glued together nicely. That is,  \begin{equation}
	f_{\mathcal{E}} = f_{\mathcal{E}'} \ \ on \ \ \mathbb{S}^1.
	\end{equation} But, it follows from the representability of the functor $ \mathcal{F}_{ell} $ that we must have \begin{equation}
	[\mathcal{E}] = [\mathcal{E}'],
	\end{equation} which is a contradiction.
	
\end{proof}

\begin{remark} \begin{enumerate} \item []
		\item The construction above shows that the correspondence \ref{correspondence} is not good enough to distinguish the "trivial" and "twisted" families of isomorphism classes of elliptic curves over $\mathbb{S}^1$ with generically isomorphic fibers. As before, the main source of this failure is due to the existence of non-trivial automorphism group for the fibers of the form  $ \mathbb{C}/\Lambda_{i}.$ 
		The existence of non-trivial automorphisms, on the other hand, is due to the fact that $PSL_2(\mathbb{Z})$ acts on $\mathbb{H}$ non-freely.
		\item One way of circumventing this sort of problem is to change the way of organizing the moduli data. For instance, we can use the language of stacks, and redefine the moduli problem as a certain gruopoid-valued "functor" \begin{equation}
		\mathcal{F}: \mathcal{C}^{op} \longrightarrow \ Grpds
		\end{equation} 
		where \textbf{$Grpds$} denotes \textit{the 2-category of groupoids} with objects being categories $\mathcal{C}$ in which all morphisms are isomorphisms (these sorts of categories are called \textit{groupoids}), 1-morphisms being functors $\mathcal{F}: \mathcal{C} \rightarrow \mathcal{D}$ between groupoids, and 2-morphisms being natural transformations $\psi: \mathcal{F} \Rightarrow \mathcal{F'}$ between two functors. Note that the groupoid structure, in fact, allows us to keep track of the non-trivial automorphisms as a part of the moduli data.
		\item When we go back the example above, one can easily check that the space $\mathbb{H}/PSL_2(\mathbb{Z})$ can, in fact, be regarded as a \textit{groupoid} 	where  objects are the elements of $\mathbb{H}$, and the set of morphisms is the set $PSL_2(\mathbb{Z}) \times \mathbb{H}.$ In fact,
		\begin{align}
		Mor_{\mathbb{H}/PSL_2(\mathbb{Z})}(x,y):= & \big\{ g \in PSL_2(\mathbb{Z}):  y=g\cdot x  \big \} \nonumber\\
		\cong & \ PSL_2(\mathbb{Z}) \times \mathbb{H}.
		\end{align}
		Denote a morphism by $(g,x)$: \ $x\mapsto g\cdot x.$ Note that two morphisms $(g,x)$ and $(h,y)$ are composable if $x=h\cdot y$. Then we have \begin{equation}
		(g,h\cdot y) \circ (h,y) = (gh, y).
		\end{equation} Furthermore, the inverse of the morphism $(g,x)$ is $(g^{-1},g\cdot x)$. Informally speaking, two non-isomorphic families $\mathcal{E}$ and $\mathcal{E}'$ above can be represented by points like $[i, f]$ and $[i,id]$ via suitable constant representing morphisms as above where the second slots in the parenthesis are to keep track of possible automorphisms distinguishing these families. The last statement will be elaborated in the next section. In literature, the space $\mathbb{H}/PSL_2(\mathbb{Z})$ is  an example of an \textit{orbifold}.
		
	\end{enumerate}
	
\end{remark}

\section{2-categories and Stacks} Stacks and 2-categories serve as motivating/prototype conceptual examples before introducing the notions like \textit{$ \infty $-categories, derived schemes, higher stacks and derived stacks} \cite{Vezz2}. By using 2-categorical version of the Yoneda lemma \cite{Neumann}, one can show that the "refined" moduli functor \begin{equation}
\mathcal{F}: \mathcal{C}^{op} \longrightarrow \ Grpds
\end{equation}  turns out to be representable in the 2-category $Stks$ of stacks.  The price we have to pay is to adopt higher categorical dictionary, which leads to the change in the level of abstraction in a way that objects under consideration may become rather counter-intuitive. We first briefly recall the basics of 2-category theory. For details, we refer to \cite{Stacks, Neumann}.
\subsection{A digression on 2-categories}
\begin{definition}
	\textit{	A 2-category $\mathcal{C}$} consists of the following data:
	\begin{enumerate}
		\item A collection of objects: $Ob(\mathcal{C})$.
		\item For each pair $x,y$ of objects, \textit{a category} $Mor_{\mathcal{C}}(x,y)$. Here, objects of the category $Hom_{\mathcal{C}}(x,y)$ are called \textit{1-morphisms} and are denoted either by $f:x \rightarrow y$ or $ x \xrightarrow{f} y $. The morphisms of $Mor_{\mathcal{C}}(x,y)$ are called \textit{2-morphisms} and are denoted either by $\phi : f \Rightarrow g$ or \begin{equation}
		{\small \begin{tikzpicture}
			\matrix[matrix of nodes,column sep=1.7cm] (cd)
			{
				$ x $ & $ y. $ \\
			};
			\draw[->] (cd-1-1) to[bend left=50] node[label=above:$ {f} $] (U) {} (cd-1-2);
			\draw[->] (cd-1-1) to[bend right=50,name=D] node[label=below:${g}$] (V) {} (cd-1-2);
			\draw[double,double equal sign distance,-implies,shorten >=5pt,shorten <=5pt] 
			(U) -- node[label=left:${\tiny \phi}$] {} (V);
			\end{tikzpicture}}
		\end{equation}The composition of two 2-morphisms $\alpha : f \Rightarrow g$ and $\beta : g \Rightarrow h$ in $Mor_{\mathcal{C}}(x,y)$ is called \textit{a vertical composition}, and denoted by $\beta \circ \alpha: f \Rightarrow h$ or
		\begin{equation}
		{\small \begin{tikzpicture}
			\matrix[matrix of nodes,column sep=2.4cm] (cd)
			{
				$ x $ & $ y, $ \\
			};
			\draw[->] (cd-1-1) to[bend left=55] node[label=above:$ {f} $] (U) {} (cd-1-2);
			\draw[->] (cd-1-1) to[bend right=55,name=D] node[label=below:${h}$] (V) {} (cd-1-2);
			\draw[double,double equal sign distance,-implies,shorten >=22pt,shorten <=-1pt] 
			(U) -- node[label=below left:${\tiny \beta}$] {} (V);
			\draw[->] (cd-1-1) to [bend right=3] node[label= below right:${g}$] (V) {} (cd-1-2);
			\draw[double,double equal sign distance,-implies,shorten >=-22pt,shorten <=25pt] 
			(U) -- node[label=left:${\tiny \alpha }$] {} (V);
			\end{tikzpicture}}
		{\small \begin{tikzpicture}
			\matrix[matrix of nodes,column sep=2.4cm] (cd)
			{
				$ x $ & $ y. $ \\
			};
			\draw[->] (cd-1-1) to[bend left=55] node[label=above:$ {f} $] (U) {} (cd-1-2);
			\draw[->] (cd-1-1) to[bend right=55,name=D] node[label=below:${h}$] (V) {} (cd-1-2);
			\draw[double,double equal sign distance,-implies,shorten >=2pt,shorten <=5pt] 
			(U) -- node[label=left:${\tiny \beta \circ \alpha }$] {} (V);
			\end{tikzpicture}}
		\end{equation} A 2-morphism $\alpha:f \Rightarrow g$ is \textit{invertible} if there exits a 2-morphism $\beta: g \Rightarrow f$ such that $\beta \circ \alpha = id_f$ and $\alpha \circ \beta = id_g$. Furthermore, an invertible 2-morphism $\alpha: f \Rightarrow g$ is called a \textit{2-isomorphism}. It is sometimes denoted by $\alpha: f \Leftrightarrow g$.
		\item For each triple $x,y,z$ of objects in $\mathcal{C}$, there is \textit{a composition functor} 
		\begin{equation}
		\mu_{x,y,z}: Mor_{\mathcal{C}}(x,y) \times Mor_{\mathcal{C}}(y,z) \rightarrow Mor_{\mathcal{C}}(x,z)
		\end{equation} which is defined as follows: \begin{enumerate}
			\item On 1-morphisms, it acts as the usual composition of morphisms in $\mathcal{C}$: $$\mu_{x,y,z}: \big(x \xrightarrow{f} y, y \xrightarrow{g}z \big) \longmapsto \big(y \xrightarrow{g \circ f} z\big)$$
			\item On 2-morphisms, it acts as a \textit{horizontal composition}, denoted by $\star$: \begin{equation}
			\mu_{x,y,z}: \big(f \xRightarrow{\alpha} f', g \xRightarrow{\alpha'}g' \big) \longmapsto \big(g \circ f \xRightarrow{\alpha' \star \alpha} g' \circ f'\big)
			\end{equation}
			That is, given two 2-morphisms \begin{equation}
			{\small \begin{tikzpicture}
				\matrix[matrix of nodes,column sep=1.7cm] (cd)
				{
					$ x $ & $ y,$ \\
				};
				\draw[->] (cd-1-1) to[bend left=50] node[label=above:$ {f} $] (U) {} (cd-1-2);
				\draw[->] (cd-1-1) to[bend right=50,name=D] node[label=below:${f'}$] (V) {} (cd-1-2);
				\draw[double,double equal sign distance,-implies,shorten >=5pt,shorten <=5pt] 
				(U) -- node[label=left:${\tiny \alpha}$] {} (V);
				\end{tikzpicture}} {\small \begin{tikzpicture}
				\matrix[matrix of nodes,column sep=1.7cm] (cd)
				{
					$  \ y $ & $ z, $ \\
				};
				\draw[->] (cd-1-1) to[bend left=50] node[label=above:$ {g} $] (U) {} (cd-1-2);
				\draw[->] (cd-1-1) to[bend right=50,name=D] node[label=below:${g'}$] (V) {} (cd-1-2);
				\draw[double,double equal sign distance,-implies,shorten >=5pt,shorten <=5pt] 
				(U) -- node[label=left:${\tiny \alpha'}$] {} (V);
				\end{tikzpicture}}
			\end{equation}
			
			$ \mu_{x,y,z} $ maps the pair $ \big(f \xRightarrow{\alpha} f', g \xRightarrow{\alpha'}g' \big) $ of 2-morphisms to a 2-morphism 
			\begin{equation}
			{\small \begin{tikzpicture}
				\matrix[matrix of nodes,column sep=3.1cm] (cd)
				{
					$ x $ & $ z. $ \\
				};
				\draw[->] (cd-1-1) to[bend left=50] node[label=above:$ {g\circ f} $] (U) {} (cd-1-2);
				\draw[->] (cd-1-1) to[bend right=50,name=D] node[label=below:${g'\circ f'}$] (V) {} (cd-1-2);
				\draw[double,double equal sign distance,-implies,shorten >=5pt,shorten <=5pt] 
				(U) -- node[label=left:${\tiny \alpha' \star \alpha}$] {} (V);
				\end{tikzpicture}}
			\end{equation}
		\end{enumerate}
	\end{enumerate} These data must satisfy the following conditions:
	\begin{itemize}
		\item For each object $X$ of $\mathcal{C}$ and each 1-morphism $f:A \rightarrow B$, we have an identity 1-morphism $id_X: X \rightarrow X$ and an identity 2-morphism $id_f: f \Rightarrow f$ respectively.
		\item The composition of 1-morphisms (2-morphisms respectively) is associative.
		\item Horizontal and vertical compositions of 2-morphisms are "compatible" in the following sense. For a composition diagram
		\begin{equation}
		{\small \begin{tikzpicture}
			\matrix[matrix of nodes,column sep=2.2cm] (cd)
			{
				$ x $ & $ y $ & $ z $ \\
			};
			\draw[->] (cd-1-1) to[bend left=55] node[label=above right:$ {f} $] (U) {} (cd-1-2);
			
			\draw[->] (cd-1-1) to[bend right=65,name=D] node[label=below right:${f''}$] (V) {} (cd-1-2);
			\draw[double,double equal sign distance,-implies,shorten >=22pt,shorten <=-1pt] 
			(U) -- node[label=below left:${\tiny \alpha'}$] {} (V);
			\draw[->] (cd-1-1) to [bend right=3] node[label=  right:${\tiny f'}$] (V) {} (cd-1-2);
			\draw[double,double equal sign distance,-implies,shorten >=-23pt,shorten <=25pt] 
			(U) -- node[label=left:${\tiny \alpha }$] {} (V);
			
			\draw[->] (cd-1-2) to[bend left=55] node[label=above:$ {g} $] (U) {} (cd-1-3);
			\draw[->] (cd-1-2) to[bend right=55,name=D] node[label=below:${g''}$] (V) {} (cd-1-3);
			\draw[double,double equal sign distance,-implies,shorten >=22pt,shorten <=-1pt] 
			(U) -- node[label=below left:${\tiny \beta'}$] {} (V);
			\draw[->] (cd-1-2) to [bend right=3] node[label= right:${g'}$] (V) {} (cd-1-3);
			\draw[double,double equal sign distance,-implies,shorten >=-22pt,shorten <=25pt] 
			(U) -- node[label=left:${\tiny \beta }$] {} (V);
			\end{tikzpicture}}
		\end{equation}we have 
		\begin{equation}
		(\beta' \circ \beta) \star (\alpha' \circ \alpha) = (\beta' \star \alpha') \circ (\beta \star \alpha).
		\end{equation}
		
	\end{itemize}
\end{definition}

\begin{example}
	Every category can be realized as a 2-category. Indeed, let $\mathcal{C}$ be a category and $Mor_{\mathcal{C}}(A,B)$ denote the \textit{set} of morphisms between two objects $A,B$. Then it is clear to observe that for any pair $(A,B)$ of objects in $\mathcal{C}$ the set $Mor_{\mathcal{C}}(A,B)$ defines a category \textbf{Mor}$_{\mathcal{C}}$(A,B) whose objects (1-morphisms) are just morphisms $A \rightarrow B$ in $\mathcal{C}$, and morphisms (2-morphisms) are just identities. That is, there are no non-trivial higher morphisms in this realization. A category is sometimes called a \textit{1-category}.
\end{example}

\begin{remark}
	Given a 2-category $\mathcal{C}$, one can obtain a category $C_0$ by defining $Ob(C_0):=Ob(\mathcal{C})$ and the "set" $Mor_{C_0}(A,B)$ of morphisms in $C_0$ to be \begin{equation}
	Mor_{C_0} (A,B):= Mor_{\mathcal{C}}(A,B)/ \sim
	\end{equation} where $f \sim g$ if there exits a 2-isomorphism $\alpha : f \Leftrightarrow g$. That is, $Mor_{C_0}(A,B)$ is just the set of isomorphism classes of 1-morphisms in $\mathcal{C}$.
\end{remark}

\begin{example}
	A collection of categories forms a 2-category, namely the 2-category $Cat$ of categories. Here, \textit{objects} of $Cat$ are just categories $\mathcal{C}$, \textit{1-morphisms} in $Cat$ are functors $\mathcal{F}: \mathcal{C} \rightarrow \mathcal{D}$ between two categories, and \textit{2-morphisms} are natural transformations $\eta: \mathcal{F} \Rightarrow \mathcal{G}$ of functors. In this example, there are no non-trivial higher $n$-morphisms for $n>2$.  Once we allow such types of morphisms, we land in the territory of higher categories.
\end{example}

\begin{definition}
	Let $\mathcal{C}$ be a 2-category. Two objects $X,Y$ in $\mathcal{C}$ are said to be \textit{equivalent} if there exist a pair $(X \xrightarrow{f} Y, Y \xrightarrow{g} X)$ of 1-morphisms, and two 2-isomorphisms $\alpha: g \circ f \Leftrightarrow id_X$ and $\alpha': f \circ g \Leftrightarrow id_Y$. 
\end{definition}

\begin{definition}
	\textit{A pseudo-functor $\mathcal{F}: \mathcal{C} \rightarrow \mathcal{D}$ between two 2-categories} $\mathcal{C}, \mathcal{D}$ consists of the following data:
	\begin{enumerate}
		\item For each object $A$ in $\mathcal{C}$, \ an object $ \mathcal{F}(A) $ in $ \mathcal{D}$,
		\item For each 1-morphism $A \xrightarrow{f} B$ in $\mathcal{C}$, \ a 1-morphism $\mathcal{F}(A) \xrightarrow{\mathcal{F}(f)} \mathcal{F}(B)$ in $\mathcal{D}$,
		\item For each 2-morphism $\alpha: f \Rightarrow g$ in $\mathcal{C}$, \ a 2-morphism $\mathcal{F}(\alpha): \mathcal{F}(f) \Rightarrow \mathcal{F}(g)$ in $\mathcal{D}$ satisfying the following conditions: \begin{enumerate}
			\item $\mathcal{F}$ respects 1- and 2-identities: $\mathcal{F}(id_A)=id_{\mathcal{F}(A)}$ and $\mathcal{F}(id_f)=id_{\mathcal{F}(f)}$ for all $A\in Ob(\mathcal{C})$ and $f \in Mor_{\mathcal{C}}(X,Y)$.
			\item $\mathcal{F}$ respects a composition of 1-morphisms up to a 2-isomorphism: Given a composition of 1-morphisms $ A \xrightarrow{f} B \xrightarrow{g} C$ in $\mathcal{C}$, there is a 2-isomorphism
			\begin{equation}
			\phi_{g,f}^{\mathcal{F}}:  \mathcal{F}(g \circ f) \Rightarrow \mathcal{F}(g) \circ \mathcal{F}(f)
			\end{equation} such that the following diagram commutes \textit{(encoding the associativity)}:
			
			\begin{equation} 
			\begin{tikzpicture}
			\matrix (m) [matrix of math nodes,row sep=2em,column sep=3em,minimum width=1em] {
				\mathcal{F}(h \circ g \circ f)  & \mathcal{F}(h) \circ \mathcal{F}(g \circ f)  \\
				\mathcal{F}(h \circ g) \circ \mathcal{F}(f)& \mathcal{F}(h)\circ \mathcal{F}(g)\circ \mathcal{F}(f)\\};
			\path[-stealth]
			(m-1-1) edge [double] node [left] {{\small $ \phi_{h\circ g,f}^{\mathcal{F}} $}} (m-2-1)
			edge [double] node [below] {{\small $ \phi_{h, g \circ f}^{\mathcal{F}} $}} (m-1-2)
			(m-2-1.east|-m-2-2) edge [double] node [below] {} node [below] {{\footnotesize $ \phi_{h,g}^{\mathcal{F}} \star id_{\mathcal{F}(f)}$}} (m-2-2)
			(m-1-2) edge [double] node [right] {{\small $ id_{\mathcal{F}(h)} \star \phi_{g,f}^{\mathcal{F}}$}} (m-2-2);
			\end{tikzpicture}
			\end{equation} such that $\phi_{f,id_A}^{\mathcal{F}}= \phi_{id_B,f}^{\mathcal{F}}= id_{\mathcal{F}(f)}$.
			\item $\mathcal{F}$ respects both vertical and horizontal compositions: Given a vertical composition $\beta \circ \alpha: f \Rightarrow h$ with
			\begin{equation}
			{\small \begin{tikzpicture}
				\matrix[matrix of nodes,column sep=2.2cm] (cd)
				{
					$ x $ & $ y, $ \\
				};
				\draw[->] (cd-1-1) to[bend left=55] node[label=above:$ {f} $] (U) {} (cd-1-2);
				\draw[->] (cd-1-1) to[bend right=55,name=D] node[label=below:${h}$] (V) {} (cd-1-2);
				\draw[double,double equal sign distance,-implies,shorten >=22pt,shorten <=-1pt] 
				(U) -- node[label=below left:${\tiny \beta}$] {} (V);
				\draw[->] (cd-1-1) to [bend right=3] node[label=  right:${g}$] (V) {} (cd-1-2);
				\draw[double,double equal sign distance,-implies,shorten >=-22pt,shorten <=25pt] 
				(U) -- node[label=left:${\tiny \alpha }$] {} (V);
				\end{tikzpicture}}
			\end{equation} we have $\mathcal{F}(\beta \circ \alpha)= \mathcal{F}(\beta) \circ \mathcal{F}(\alpha)$. Given a horizontal composition \begin{equation}
			{\small \begin{tikzpicture}
				\matrix[matrix of nodes,column sep=2.8cm] (cd)
				{
					$ x $ & $ z, $ \\
				};
				\draw[->] (cd-1-1) to[bend left=50] node[label=above:$ {g\circ f} $] (U) {} (cd-1-2);
				\draw[->] (cd-1-1) to[bend right=50,name=D] node[label=below:${g'\circ f'}$] (V) {} (cd-1-2);
				\draw[double,double equal sign distance,-implies,shorten >=5pt,shorten <=5pt] 
				(U) -- node[label=left:${\tiny \alpha' \star \alpha}$] {} (V);
				\end{tikzpicture}}
			\end{equation} with $\alpha: f \Rightarrow f'$ and $\alpha': g \Rightarrow g'$, we have the following commutative diagram:
			\begin{equation} 
			\begin{tikzpicture}
			\matrix (m) [matrix of math nodes,row sep=3em,column sep=4em,minimum width=2em] {
				\mathcal{F}(g)\circ	\mathcal{F}(f)  & \mathcal{F}(g')\circ	\mathcal{F}(f')  \\
				\mathcal{F}(g \circ f) & \mathcal{F}(g' \circ f')\\};
			\path[-stealth]
			(m-1-1) edge [double] node [left] {{\small $ \phi_{g,f}^{\mathcal{F}} $}} (m-2-1)
			edge [double] node [below] {{\small $ \mathcal{F}(\alpha') \star \mathcal{F}(\alpha) $}} (m-1-2)
			(m-2-1.east|-m-2-2) edge [double] node [below] {} node [below] {{\footnotesize $ \mathcal{F}(\alpha'\star \alpha) $}} (m-2-2)
			(m-1-2) edge [double] node [right] {{\small $\phi_{g',f'}^{\mathcal{F}}$}} (m-2-2);
			\end{tikzpicture}
			\end{equation}
			
		\end{enumerate}
	\end{enumerate}
\end{definition}

\begin{definition}
\begin{enumerate} \item []
	\item 	A \textbf{prestack} on $\mathcal{C}$ is defined to be  a particular (contravariant) pseudo-functor $\mathcal{F}: \mathcal{C} \rightarrow \mathcal{D}$ where $\mathcal{C}$ is an ordinary category and $\mathcal{D}$ is the 2-category $Grpds$ of groupoids. That is, it is a pseudo-functor $\mathcal{F}: \mathcal{C}^{op} \rightarrow Grpds$ for some category $\mathcal{C}$. Recall that, in the 2-category $Grpds$ of groupoids, objects are categories $\mathcal{C}$ in which all morphisms are isomorphisms (these sorts of categories are called \textit{groupoids}), 1-morphisms are functors $\mathcal{F}: \mathcal{C} \rightarrow \mathcal{D}$ between groupoids, and 2-morphisms are  natural transformations $\psi: \mathcal{F} \Rightarrow \mathcal{F'}$ between two functors. \item Let $\mathcal{C}_{\tau}$ be a site. That is, $\mathcal{C}_{\tau}$ is a category $\mathcal{C}$ endowed with a Grothendieck topology $\tau$. Then, \textbf{a stack} on $\mathcal{C}_{\tau}$ is just a prestack with local-to-global properties w.r.t. $\tau$. So, it can be considered as "a sheaf of groupoids" in a suitable sense.
\end{enumerate}
\end{definition}

\subsection{2-category of Stacks and 2-Yoneda's Lemma} We like to present how stacks over a site $\mathcal{C}_{\tau}$ form a 2-category. To this end, we need to introduce the notions of 1- and 2-morphisms between two stacks.

\begin{definition}
	Let $\mathcal{C}$ be a category, and $\mathcal{X}, \mathcal{Y}: \mathcal{C}^{op} \longrightarrow Grpds $ be two prestacks. \textit{A 1-morphism $F: \mathcal{X} \rightarrow \mathcal{Y}$ of prestacks} consists of the following data:
	\begin{enumerate}
		\item For each object $A$ in $\mathcal{C}$, \ \textit{a functor} $F_A: \mathcal{X}(A) \rightarrow \mathcal{Y}(A)$,
		\item For each morphism $f: A \rightarrow B$ in $\mathcal{C}$, \ a 2-isomorphism \begin{equation}
		{\small \begin{tikzpicture}
			\matrix[matrix of nodes,column sep=1.7cm] (cd)
			{
				$ \mathcal{X}(B) $ & $ \mathcal{Y}(A) $ \\
			};
			\draw[->] (cd-1-1) to[bend left=50] node[label=above:$ {F_A \circ \mathcal{X}(f)} $] (U) {} (cd-1-2);
			\draw[->] (cd-1-1) to[bend right=50,name=D] node[label=below:${\mathcal{Y}(f) \circ F_B}$] (V) {} (cd-1-2);
			\draw[double,double equal sign distance,-implies,shorten >=5pt,shorten <=5pt] 
			(U) -- node[label=left:${\tiny F_f}$] {} (V);
			\end{tikzpicture}}
		\end{equation} such that the following  diagram commutes  up to a 2-isomorphism $$F_f: F_A \circ \mathcal{X}(f) \Longrightarrow \mathcal{Y}(f) \circ F_B$$ \ in $Grpds$: 
		\begin{equation} 
		\begin{tikzpicture}
		\matrix (m) [matrix of math nodes,row sep=3em,column sep=4em,minimum width=2em] {
			\mathcal{X}(B)  & \mathcal{Y}(B)  \\
			\mathcal{X}(A) & \mathcal{Y}(A)\\};
		\path[-stealth]
		(m-1-1) edge  node [left] {{\small $ \mathcal{X}(f) $}} (m-2-1)
		edge  node [below] {{\small $ F_B $}} (m-1-2)
		(m-2-1.east|-m-2-2) edge  node [below] {} node [below] {{\footnotesize $ F_A $}} (m-2-2)
		(m-1-2) edge  node [right] {{\small $\mathcal{Y}(f)$}} (m-2-2);
		\end{tikzpicture}
		\end{equation}
		
		\item Given a composition of 1-morphisms $ A \xrightarrow{f} B \xrightarrow{g} C$ in $\mathcal{C}$, the corresponding 2-isomorphisms $F_f$ and $F_g$ define $F_{g \circ f}$ in compatible with the natural 2-isomorphisms $\phi_{g,f}^{\mathcal{X}}$ and $\phi_{g,f}^{\mathcal{Y}}$. Indeed, using horizontal and vertical compositions of 2-morphisms, $F_{g \circ f}: F_A \circ \mathcal{X}(g \circ f) \Rightarrow \mathcal{Y}(g \circ f) \circ F_C$ is given by \begin{equation*}
		F_{g \circ f}= \big(\phi_{g,f}^{\mathcal{Y}} \star id_{F_C}\big) \circ \big(id_{\mathcal{Y}(f)} \star F_g \big) \circ \big(F_f \star id_{\mathcal{X}(g)}\big) \circ \big(id_{F_A} \star \phi_{g,f}^{\mathcal{X}}\big).
		\end{equation*}
	\end{enumerate}Such a 1-morphism is just \textit{a natural transformation} between two pseudo-functors of 2-categories. 
\end{definition}

\begin{definition}
	Let $\mathcal{C}$ be a category, and $F, G: \mathcal{X} \rightarrow \mathcal{Y}$ be a pair of 1-morphisms of prestacks. \textit{A 2-morphism of between $F$ and $G$} 
	\begin{equation}
	{\small \begin{tikzpicture}
		\matrix[matrix of nodes,column sep=1.7cm] (cd)
		{
			$ \mathcal{X} $ & $ \mathcal{Y} $ \\
		};
		\draw[->] (cd-1-1) to[bend left=50] node[label=above:$ {F} $] (U) {} (cd-1-2);
		\draw[->] (cd-1-1) to[bend right=50,name=D] node[label=below:${G}$] (V) {} (cd-1-2);
		\draw[double,double equal sign distance,-implies,shorten >=5pt,shorten <=5pt] 
		(U) -- node[label=left:${\tiny \Psi}$] {} (V);
		\end{tikzpicture}}
	\end{equation} is a collection $\big\{\Psi_A: F_A \Rightarrow G_A : A\in Ob(\mathcal{C}) \big\}$ of invertible natural transformations of functors.
\end{definition}

\begin{remark}
	A 1-morphism (2-morphism respectively) of stacks is defined as a 1-morphism (2-morphism resp.) of the underlying prestacks. 
\end{remark}

\begin{proposition} \cite{Neumann}
	\begin{enumerate}
		\item Given a category $\mathcal{C}$, prestacks over $\mathcal{C}$ form a 2-category $PreStk_{\mathcal{C}}$ of prestacks where 1- and 2-morphisms are defined as above.
		\item Stacks over a site $\mathcal{C}$ form a 2-category $Stk_{\mathcal{C}}$ of stacks with 1- and 2-morphisms being as above.
	\end{enumerate}
\end{proposition}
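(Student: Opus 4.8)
The plan is to verify, one clause at a time, the data and axioms in the definition of a 2-category for the putative $PreStk_{\mathcal{C}}$, and then to deduce the statement for $Stk_{\mathcal{C}}$ by exhibiting it as a full sub-2-category. The objects are prestacks $\mathcal{X}\colon\mathcal{C}^{op}\to Grpds$. For a pair $\mathcal{X},\mathcal{Y}$ one must produce a category $Mor_{PreStk_{\mathcal{C}}}(\mathcal{X},\mathcal{Y})$: its objects are the 1-morphisms $F\colon\mathcal{X}\to\mathcal{Y}$ and its morphisms are the 2-morphisms $\Psi\colon F\Rightarrow G$ of the preceding two definitions. I would define composition and identities in this category \emph{componentwise}: for 2-morphisms $\Psi\colon F\Rightarrow G$ and $\Psi'\colon G\Rightarrow H$ set $(\Psi'\circ\Psi)_A := \Psi'_A\circ\Psi_A$ in the functor category $Fun(\mathcal{X}(A),\mathcal{Y}(A))$, and $(\mathrm{id}_F)_A := \mathrm{id}_{F_A}$; since a pointwise composite of invertible natural transformations is again invertible, these are 2-morphisms, and the category axioms for $Mor_{PreStk_{\mathcal{C}}}(\mathcal{X},\mathcal{Y})$ are inherited from those of the functor categories $Fun(\mathcal{X}(A),\mathcal{Y}(A))$.

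Next I would construct the composition functor $\mu_{\mathcal{X},\mathcal{Y},\mathcal{Z}}$. On 1-morphisms, given $F\colon\mathcal{X}\to\mathcal{Y}$ and $G\colon\mathcal{Y}\to\mathcal{Z}$, put $(G\circ F)_A := G_A\circ F_A$ and let the structure 2-isomorphism $(G\circ F)_f$ be built from $F_f$ and $G_f$ by whiskering with identities, exactly as dictated by the displayed formula for $F_{g\circ f}$ in the definition of a 1-morphism (with $\phi^{\mathcal{X}},\phi^{\mathcal{Z}}$ replaced by $\phi^{\mathcal{Y}},\phi^{\mathcal{Z}}$ as appropriate); checking that $G\circ F$ is again a 1-morphism then amounts to the coherence identity relating $(G\circ F)_{g\circ f}$ to $\phi^{\mathcal{X}}_{g,f}$ and $\phi^{\mathcal{Z}}_{g,f}$, a pasting-diagram chase that uses the interchange law together with the pseudofunctoriality constraints of $\mathcal{X},\mathcal{Y},\mathcal{Z}$. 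On 2-morphisms, given $\Psi\colon F\Rightarrow F'$ and $\Psi'\colon G\Rightarrow G'$, put $(\Psi'\star\Psi)_A := \Psi'_A\star\Psi_A$, the Godement (horizontal) composite in $Grpds$; the delicate point here is to check that this collection is compatible with the structure 2-isomorphisms $(G\circ F)_f$ and $(G'\circ F')_f$, so that it genuinely defines a 2-morphism $G\circ F\Rightarrow G'\circ F'$. The identity 1-morphism $\mathrm{id}_{\mathcal{X}}$ I would take with components $\mathrm{id}_{\mathcal{X}(A)}$ and identity structure 2-isomorphisms, and then check it is a two-sided unit for $\mu$.

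It then remains to check the 2-category axioms. Associativity of vertical composition of 2-morphisms and the unit laws for $\mathrm{id}_F$ hold because they hold componentwise in each functor category. Associativity of composition of 1-morphisms reduces to the (strict) associativity of functor composition at each object $A$, together with the coherence check that the two ways of assembling the structure 2-isomorphism of $H\circ G\circ F$ coincide; the unit laws for $\mathrm{id}_{\mathcal{X}}$ are similar, using the normalization $\phi^{\mathcal{X}}_{f,\mathrm{id}}=\phi^{\mathcal{X}}_{\mathrm{id},f}=\mathrm{id}$. Finally the interchange law $(\beta'\circ\beta)\star(\alpha'\circ\alpha)=(\beta'\star\alpha')\circ(\beta\star\alpha)$ follows by evaluating at each $A\in Ob(\mathcal{C})$ and invoking the interchange law already available in $Grpds$ (hence in $Cat$). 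I expect the main obstacle to be precisely the two compatibility/coherence checks flagged above — that the whiskered $(G\circ F)_f$ satisfies the 1-morphism compatibility condition, and that $\Psi'\star\Psi$ is compatible with the relevant structure 2-isomorphisms — which are bookkeeping with pasting diagrams rather than anything conceptual, and are standard in the theory of pseudofunctors and pseudonatural transformations.

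For the second statement I would note that a 1-morphism (respectively 2-morphism) of stacks was \emph{defined} to be a 1-morphism (2-morphism) of the underlying prestacks. Hence $Stk_{\mathcal{C}}$ is the sub-2-category of $PreStk_{\mathcal{C}}$ whose objects are those prestacks that happen to be stacks and whose 1- and 2-morphisms are \emph{all} of those of $PreStk_{\mathcal{C}}$ between such objects. Since no extra condition is imposed on composites of 1-morphisms of stacks or on identity 1-morphisms, this sub-2-category is closed under $\mu$ and contains the identities, so it inherits the 2-category structure just established; this gives the 2-category $Stk_{\mathcal{C}}$.
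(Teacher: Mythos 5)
Your proposal is correct and follows the standard route; the paper itself offers no proof of this proposition (it is cited to Neumann), so your verification is exactly the content being deferred to the reference. Two small remarks. First, the one genuinely delicate point you flag --- that the Godement composite $\Psi'\star\Psi$ must be compatible with the structure $2$-isomorphisms $(G\circ F)_f$ --- is in fact vacuous relative to the definitions as stated in this paper: a $2$-morphism of prestacks is defined here merely as a collection $\{\Psi_A\colon F_A\Rightarrow G_A\}$ of invertible natural transformations, with no modification axiom relating $\Psi$ to $F_f$ and $G_f$. Under that loose definition your componentwise constructions go through with nothing to check beyond the functor-category axioms; under the more standard definition (where the modification square is imposed) your extra coherence check is exactly what is needed, so you are being appropriately careful either way. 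Second, for the stack case your argument that $Stk_{\mathcal{C}}$ is the full sub-$2$-category on those prestacks satisfying descent is the right one: being a stack is a property, not structure, and since $1$- and $2$-morphisms of stacks are by definition those of the underlying prestacks, closure under composition and identities is automatic.
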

\noindent Furthermore, it follows from the construction that we have the following observations \cite{Neumann}.

\begin{proposition} \label{sheaves as full 2-succategory of Stks}
	Let $\mathcal{C}$ be a category. Then the (1-) category $PreShv_{\mathcal{C}}$ of presheaves (of sets) over $\mathcal{C}$ is a full 2-subcategory of $PreStk_{\mathcal{C}}$. In addition, if $\mathcal{C}$ admits a site structure, then  the (1-) category $Shv_{\mathcal{C}}$ of sheaves (of sets) over $\mathcal{C}$ is a full 2-subcategory of $Stk_{\mathcal{C}}$.
\end{proposition}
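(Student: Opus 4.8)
The plan is to exhibit an explicit fully faithful 2-functor from $PreShv_{\mathcal{C}}$ into $PreStk_{\mathcal{C}}$ and to verify that its essential image is closed under 2-isomorphism (so that it identifies $PreShv_{\mathcal{C}}$ with a full 2-subcategory), and then to transport this statement to the sheaf/stack setting using the local-to-global characterization. First I would recall that a set $S$ can be regarded as a \emph{discrete groupoid}: the groupoid with object set $S$ and only identity morphisms. This assignment $S \mapsto \underline{S}$ is functorial and, crucially, a functor between discrete groupoids is the same datum as a function between the underlying sets, and the only natural transformation between two such functors is the identity (when they are equal) — so $Sets \hookrightarrow Grpds$ is a fully faithful inclusion of $1$-categories into the $2$-category $Grpds$, landing in the full sub-$2$-category of discrete groupoids. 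Given a presheaf $\mathcal{G}\colon \mathcal{C}^{op}\to Sets$, post-composing with this inclusion yields a pseudo-functor $\mathcal{C}^{op}\to Grpds$; because $\mathcal{G}$ is an honest (strict) functor, all the structure $2$-isomorphisms $\phi^{\mathcal{G}}_{g,f}$ can be taken to be identities, so this pseudo-functor is in fact a strict functor into discrete groupoids. Call the resulting prestack $\iota(\mathcal{G})$.

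Next I would unwind the definitions of $1$- and $2$-morphisms of prestacks in this discrete case. A $1$-morphism $F\colon \iota(\mathcal{G})\to \iota(\mathcal{H})$ consists of functors $F_A\colon \underline{\mathcal{G}(A)}\to\underline{\mathcal{H}(A)}$ together with $2$-isomorphisms $F_f$ for each $f\colon A\to B$; since both source and target groupoids are discrete, each $F_A$ is just a function $\mathcal{G}(A)\to\mathcal{H}(A)$, and the commuting square (which a priori commutes only up to the $2$-isomorphism $F_f$) is forced to commute \emph{on the nose}, with $F_f=\mathrm{id}$, because there are no non-identity $2$-morphisms available. Hence the data of $F$ collapse exactly to a natural transformation $\mathcal{G}\Rightarrow\mathcal{H}$ of presheaves, i.e. a morphism in $PreShv_{\mathcal{C}}$. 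Likewise, a $2$-morphism $\Psi\colon F\Rightarrow G$ is a family of invertible natural transformations $\Psi_A\colon F_A\Rightarrow G_A$ between functors of discrete groupoids, and such a thing exists only if $F_A=G_A$ for all $A$, in which case it is the identity. Therefore $\mathrm{Mor}_{PreStk_{\mathcal{C}}}(\iota(\mathcal{G}),\iota(\mathcal{H}))$, as a \emph{category}, is the discrete category on the set $\mathrm{Hom}_{PreShv_{\mathcal{C}}}(\mathcal{G},\mathcal{H})$ — which is precisely the assertion that $\iota$ is a fully faithful $2$-functor onto a full $2$-subcategory. I would also check fullness at the level of objects in the appropriate sense: a prestack $\mathcal{X}$ all of whose values $\mathcal{X}(A)$ are discrete groupoids is (canonically $2$-isomorphic to) $\iota(\mathcal{G})$ for the presheaf $\mathcal{G}(A):=\mathrm{Ob}(\mathcal{X}(A))$, so the essential image is exactly the ``discrete'' prestacks and the embedding genuinely realizes $PreShv_{\mathcal{C}}$ as a full sub-$2$-category, not merely a faithful image.

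Finally, for the sheaf statement, suppose $\mathcal{C}$ carries a Grothendieck topology $\tau$. Since $1$- and $2$-morphisms of stacks are by definition those of the underlying prestacks (per the Remark preceding the Proposition), it suffices to show that a presheaf $\mathcal{G}$ is a sheaf of sets if and only if the prestack $\iota(\mathcal{G})$ is a stack, i.e. satisfies descent. For a covering $\{U_i\to U\}$, the descent datum for $\iota(\mathcal{G})$ involves objects $x_i\in\mathcal{G}(U_i)$ together with isomorphisms $\varphi_{ij}$ over the overlaps satisfying a cocycle condition; but over discrete groupoids the only isomorphisms are identities, so the gluing isomorphisms are forced, the cocycle condition is automatic, and descent reduces to: every compatible family $(x_i)$ with $x_i|_{U_{ij}}=x_j|_{U_{ij}}$ comes from a \emph{unique} $x\in\mathcal{G}(U)$ — exactly the sheaf axiom for $\mathcal{G}$. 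Hence $\iota$ restricts to a fully faithful $2$-functor $Shv_{\mathcal{C}}\hookrightarrow Stk_{\mathcal{C}}$ with full image the discrete stacks, giving the second claim. The main obstacle I anticipate is purely bookkeeping rather than conceptual: one must be careful, when checking fullness, that a prestack landing in discrete groupoids really has \emph{all} its structure $2$-isomorphisms trivial and that the pseudo-functoriality coherences degenerate correctly, so that the reconstruction $\mathcal{X}\leadsto\mathcal{G}$ is strictly functorial and the canonical comparison $\iota(\mathcal{G})\to\mathcal{X}$ is an equivalence of prestacks; this is where a careless argument could silently use a non-identity $2$-cell that does not exist.
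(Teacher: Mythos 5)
Your argument is correct. The paper itself offers no proof of this proposition --- it defers to \cite{Neumann} with the remark that the statement ``follows from the construction'' --- and what you have written is exactly the standard unwinding that is being alluded to: regard a set as a discrete groupoid, observe that every natural transformation between functors valued in a discrete groupoid is forced to be an identity (so the coherence $2$-cells $F_f$ and $\phi^{\mathcal{F}}_{g,f}$ all collapse, $1$-morphisms of discrete prestacks become natural transformations of presheaves, and the hom-categories become discrete), and identify descent for a discrete prestack with the sheaf axiom. On the last point your phrase ``comes from a \emph{unique} $x$'' does correctly account for both halves of the stack condition --- uniqueness corresponds to the requirement that the Hom-presheaves be sheaves (morphisms glue), existence to effectivity of descent --- though it would be worth one extra sentence making that decomposition explicit. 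The only cosmetic excess is the discussion of the essential image: being a \emph{full} $2$-subcategory only requires the induced functors on morphism categories to be isomorphisms, so identifying the image with the discrete prestacks, while correct, is more than the statement demands.
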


\begin{remark}
	As we have already discussed in Remark \ref{rmks on Yoneda's lemma}, Yoneda's lemma \ref{Yoneda's lemma} implies that any object $X$ in a category $\mathcal{C}$ can be understood by studying all morphism into it. In other words, for any $X\in Ob(\mathcal{C})$ one has a particular \textit{sheaf of sets}	\begin{equation}
	h_X: \mathcal{C}^{op} \longrightarrow \ Sets, \ \ \ Y \longmapsto h_X(Y):=Mor_{\mathcal{C}}(Y,X)
	\end{equation} which uniquely determines $X$. If $\mathcal{C}$ admits a suitable site structure, then it follows from Proposition \ref{sheaves as full 2-succategory of Stks} that the sheaf $Mor_{\mathcal{C}}(\cdot,X)$ can be considered as a stack with trivial 2-morphisms. We denote this stack by $\underline{X}$.
	
\end{remark}

\begin{lemma} (2-categorical Yoneda's Lemma for prestacks)
	Let $\mathcal{Y}: \mathcal{C}^{op} \longrightarrow Grpds$ be a prestack over a category $\mathcal{C}$. Then for each object $X$ in $\mathcal{C}$, there exits an equivalence of categories \begin{equation}
	\mathcal{Y} (X) \cong Mor_{PreStk_{\mathcal{C}}} ( \underline{X}, \mathcal{Y}).
	\end{equation}
\end{lemma}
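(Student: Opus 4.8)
The plan is to imitate the proof of the ordinary Yoneda Lemma \ref{Yoneda's lemma}, upgrading ``bijection of sets'' to ``equivalence of categories'' and carrying along the coherence $2$-isomorphisms attached to pseudo-functors. Concretely, I would produce two functors
\[
\Phi: Mor_{PreStk_{\mathcal{C}}}(\underline{X},\mathcal{Y}) \longrightarrow \mathcal{Y}(X),
\qquad
\Theta: \mathcal{Y}(X) \longrightarrow Mor_{PreStk_{\mathcal{C}}}(\underline{X},\mathcal{Y}),
\]
and then exhibit natural isomorphisms $\Phi\circ\Theta \cong \mathrm{id}_{\mathcal{Y}(X)}$ and $\Theta\circ\Phi\cong \mathrm{id}$. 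Recall that $\underline{X}(A)$ is the \emph{discrete} groupoid on the set $Mor_{\mathcal{C}}(A,X)$, so a functor out of $\underline{X}(A)$ is determined by its effect on objects, and naturality of any transformation over $\underline{X}(A)$ is automatic.

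The functor $\Phi$ is ``evaluation at the identity''. A $1$-morphism $F:\underline{X}\to\mathcal{Y}$ supplies in particular a functor $F_X:\underline{X}(X)=Mor_{\mathcal{C}}(X,X)\to\mathcal{Y}(X)$, and I set $\Phi(F):=F_X(\mathrm{id}_X)$. A $2$-morphism $\Psi:F\Rightarrow G$ gives an invertible natural transformation $\Psi_X:F_X\Rightarrow G_X$, and $\Phi(\Psi):=\Psi_X(\mathrm{id}_X):F_X(\mathrm{id}_X)\to G_X(\mathrm{id}_X)$; functoriality is immediate. The functor $\Theta$ is ``pullback of a fixed object''. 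Given $\xi\in\mathcal{Y}(X)$, define $F^{\xi}:\underline{X}\to\mathcal{Y}$ by $F^{\xi}_A(a):=\mathcal{Y}(a)(\xi)$ for $a:A\to X$; for a morphism $f:A\to B$ in $\mathcal{C}$, the required $2$-isomorphism $F^{\xi}_f:F^{\xi}_A\circ\underline{X}(f)\Rightarrow\mathcal{Y}(f)\circ F^{\xi}_B$ has, at $b:B\to X$, the component obtained by applying to $\xi$ the structural $2$-isomorphism $\phi^{\mathcal{Y}}$ of the pseudo-functor $\mathcal{Y}$ witnessing $\mathcal{Y}(b\circ f)\cong \mathcal{Y}(f)\circ\mathcal{Y}(b)$. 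One checks that the associativity coherence (item $3$ in the definition of a $1$-morphism of prestacks) for $F^{\xi}$ is precisely the associativity pentagon for the $\phi^{\mathcal{Y}}$, and that $F^{\xi}_{\mathrm{id}}$ is trivial because $\mathcal{Y}$ preserves identities strictly. On a morphism $\beta:\xi\to\xi'$ in $\mathcal{Y}(X)$, set $\Theta(\beta)$ to be the $2$-morphism with $A$-component $a\mapsto \mathcal{Y}(a)(\beta)$; invertibility comes from $\mathcal{Y}(X)$ being a groupoid, and compatibility with the $F^{\xi}_f$'s from naturality of $\phi^{\mathcal{Y}}$.

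For the two roundtrips: $\Phi(\Theta(\xi))=F^{\xi}_X(\mathrm{id}_X)=\mathcal{Y}(\mathrm{id}_X)(\xi)=\xi$, using strict preservation of identities, and this identification is natural in $\xi$. Conversely, starting from a $1$-morphism $F$ and putting $\xi:=\Phi(F)=F_X(\mathrm{id}_X)$, I claim the structural $2$-isomorphisms $F_a:F_A\circ\underline{X}(a)\Rightarrow\mathcal{Y}(a)\circ F_X$ (ranging over $a\in Mor_{\mathcal{C}}(A,X)$) assemble into a $2$-morphism $\eta^F:F\Rightarrow F^{\xi}$ whose $A$-component sends $a$ to $F_a(\mathrm{id}_X):F_A(a)\to\mathcal{Y}(a)(F_X(\mathrm{id}_X))=F^{\xi}_A(a)$, noting that $\underline{X}(a)(\mathrm{id}_X)=\mathrm{id}_X\circ a=a$. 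That $\eta^F$ is a legitimate $2$-morphism of $1$-morphisms of prestacks — i.e.\ compatible with $F_f$ and $F^{\xi}_f$ for every $f:A\to B$ — is obtained by unwinding the composition-compatibility (item $3$) of $F$; invertibility is clear since each $F_a$ is a $2$-isomorphism, and naturality of $\eta^F$ in $F$ follows by chasing components.

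The main obstacle is organizational rather than conceptual: verifying that $F^{\xi}$ genuinely satisfies the associativity coherence and that $\eta^F$ is genuinely a $2$-morphism amounts to translating the pentagon-type identities for the $\phi^{\mathcal{Y}}$ and for the $F_f$ into the precise diagrams required by the definitions of $1$- and $2$-morphisms of prestacks. No single step is deep, but keeping the contravariance of $\mathcal{Y}$ and the direction of every structural $2$-isomorphism straight throughout these coherence checks is where care is needed; everything else is a transcription of the classical Yoneda argument with ``elements'' replaced by ``objects'' and ``equalities'' replaced by ``canonical $2$-isomorphisms''.
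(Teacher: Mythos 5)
Your proposal is correct and follows essentially the same route as the paper's proof: one direction is evaluation at $\mathrm{id}_X$, the other is $\xi \mapsto \big(a \mapsto \mathcal{Y}(a)(\xi)\big)$, and the identification of $F$ with $F^{F_X(\mathrm{id}_X)}$ is made via the structural $2$-isomorphisms $F_a$ evaluated at $\mathrm{id}_X$. If anything, you are more explicit than the paper about the coherence $2$-isomorphisms $F^{\xi}_f$ coming from $\phi^{\mathcal{Y}}$ and about verifying that $\eta^F$ is a legitimate $2$-morphism, which the paper largely leaves implicit.
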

\begin{proof}
	First, let us try to understand the objects of interests in the statement. On the left hand side, $ \mathcal{Y} (X) $ is a groupoid, i.e. a category for which all morphisms are isomorphisms. On the right hand side, $ Mor_{PreStk_{\mathcal{C}}} ( \underline{X}, \mathcal{Y}) $ is the category of morphisms in the 2-category $PreStk_{\mathcal{C}}$. 1-morphisms are a collection $$\big\{F_A: \underline{X}(A) \rightarrow \mathcal{Y}(A) : A\in Ob(\mathcal{C}) \big\}$$ of functors with some compatibility conditions as above. Such collection is denoted by $F: \underline{X} \rightarrow \mathcal{Y}$. On the other hand, 2-morphisms are of the form \begin{equation}
	{\small \begin{tikzpicture}
		\matrix[matrix of nodes,column sep=1.7cm] (cd)
		{
			$ \underline{X} $ & $ \mathcal{Y} $ \\
		};
		\draw[->] (cd-1-1) to[bend left=50] node[label=above:$ {F} $] (U) {} (cd-1-2);
		\draw[->] (cd-1-1) to[bend right=50,name=D] node[label=below:${G}$] (V) {} (cd-1-2);
		\draw[double,double equal sign distance,-implies,shorten >=5pt,shorten <=5pt] 
		(U) -- node[label=left:${\tiny \Psi}$] {} (V);
		\end{tikzpicture}}
	\end{equation} where for each object $Z$, $\Phi_Z: F_Z \Rightarrow G_Z$ is an invertible natural transformation. Therefore, for any $f \in \underline{X}(Z)$, we have an isomorphism $\Psi_{Z, f}: F_Z(f) \xrightarrow{\sim} G_Z (f)$ in $\mathcal{Y}(Z)$. To show the desired equivalence, we introduce the following functors:\begin{enumerate}
		\item We define the functor $\Theta: Mor_{PreStk_{\mathcal{C}}} ( \underline{X}, \mathcal{Y}) \longrightarrow \mathcal{Y} (X)$ as follows:
		\begin{enumerate}
			\item On objects (1-morphisms), \ $\big(F: \underline{X} \rightarrow \mathcal{Y} \big) \longmapsto F_X(id_X)$
			\item On morphisms (2-morphisms), \ $$\big(\Psi: F \Rightarrow G\big) \longmapsto \big(\Psi_{X, id_X}: F_X(id_X) \xrightarrow{\sim} G_X(id_X)\big)$$ 
		\end{enumerate}
		\item Define the functor $\eta: \mathcal{Y} (X) \longrightarrow Mor_{PreStk_{\mathcal{C}}} ( \underline{X}, \mathcal{Y})$ as follows: \begin{enumerate}
			\item On objects $A$ in $\mathcal{Y}(X)$, \ $ A \longmapsto \big(F^{(A)}: \underline{X} \rightarrow \mathcal{Y} \big)$. \ Here $F^{(A)}$ is given as a collection $\{F_U^{(A)}: U\in Ob(\mathcal{C}) \}$ of functors such that \begin{equation}
			F_U^{(A)}: \underline{X}(U) \rightarrow \mathcal{Y}(U), \ (U \xrightarrow{f} X) \longmapsto \mathcal{Y}(f)(A) 
			\end{equation} where $\mathcal{Y}(f): \mathcal{Y}(X) \rightarrow \mathcal{Y}(U)$. Notice that $ \underline{X}(U)=Mor_{\mathcal{C}}(U,X) $ is just a \textit{set} in the first place, but, as we remarked before, it can be viewed as a category for which all morphisms are identities. Therefore, $F_U^{(A)}$ acts on morphisms of $\underline{X}(U)$ trivially. That is, it maps $id_f$ to $id_{F^{(A)}_U(f)}$. 
			\item $\eta$ sends morphisms $\varphi: A \xrightarrow{\sim} B$ in $\mathcal{Y}(X)$ to 2-morphisms \begin{equation}
			{\small \begin{tikzpicture}
				\matrix[matrix of nodes,column sep=2.4cm] (cd)
				{
					$ \underline{X} $ & $ \mathcal{Y} $ \\
				};
				\draw[->] (cd-1-1) to[bend left=50] node[label=above:$ {F^{(A)}} $] (U) {} (cd-1-2);
				\draw[->] (cd-1-1) to[bend right=50,name=D] node[label=below:${F^{(B)}}$] (V) {} (cd-1-2);
				\draw[double,double equal sign distance,-implies,shorten >=5pt,shorten <=5pt] 
				(U) -- node[label=left:${\tiny \Psi^{(\varphi)}}$] {} (V);
				\end{tikzpicture}}
			\end{equation} where $ \Psi^{(\varphi)} $ is a collection $\big\{\Psi^{(\varphi)}_U: F_U^{(A)} \Longrightarrow F_U^{(B)} : U\in Ob(\mathcal{C}) \big\}$ of invertible natural transformations of functors where  for each object  $f: U \rightarrow X$ in $\underline{X}(U)$, there is an isomorphism \begin{equation*}
			\Psi^{(\varphi)}_U(f): F_U ^{(A)} (f) \longrightarrow F_U^{(B)} (f), \ \Psi^{(\varphi)}_U(f):= \mathcal{Y}(f)(\varphi).
			\end{equation*} Notice that $\mathcal{Y}(f): \mathcal{Y}(X) \rightarrow \mathcal{Y}(U)$ is a functor of groupoids, and hence it maps  $\varphi$ to an isomorphism.
		\end{enumerate}
		\item Now, we like to show that the compositions of $\eta$ and $\Theta$ are identities up to 2-isomorphisms. \begin{enumerate}
			\item Let $A$ be an object in $\mathcal{Y}(X)$. Then we have \begin{align*}
			\big(\Theta \circ \eta \big) (A) &= \Theta \big(F^{(A)}: \underline{X} \rightarrow \mathcal{Y} \big) \\ & = F_X^{(A)} (id_X) \\ & = \mathcal{Y}(id_X)(A) \ \text{ by definition of } F_X^{(A)} \\ & = id_{\mathcal{Y}(X)} (A) = A \ \text{ since $ \mathcal{Y}(id_X) $ is a functor of groupoids}  
			\end{align*}
			\item Let $F: \underline{X} \rightarrow \mathcal{Y}$ be an object in $Mor_{PreStk_{\mathcal{C}}}( \underline{X}, \mathcal{Y})$. Then we get \begin{equation}
			\big(\eta \circ \Theta \big)\big(F\big) = \eta (F_X (id_X)) = F^{(F_X (id_X))}
			\end{equation} where the 1-morphism $F^{(F_X (id_X))}$ is defined by, for all $U \in Ob(\mathcal{C})$, \begin{equation*}
			F_U^{(F_X (id_X))}: \underline{X}(U) \rightarrow \mathcal{Y}(U), \ (U \xrightarrow{f} X) \longmapsto \mathcal{Y}(f)({F_X (id_X)}) 
			\end{equation*} \textit{Claim.} $\mathcal{Y}(f)({F_X (id_X)}) \cong F_U (f)$.
			\begin{proof}
				It follows directly from the definition of a 2-morphism that for a morphism $ U \xrightarrow{f} X $, there exists $$F_f: F_U \circ \underline{X}(f) \Longrightarrow \mathcal{Y}(f) \circ F_X$$ such that the following  diagram commutes  up to a 2-isomorphism  $F_f$:
				\begin{equation} 
				\begin{tikzpicture}
				\matrix (m) [matrix of math nodes,row sep=3em,column sep=4em,minimum width=2em] {
					\underline{X}(X)  & \mathcal{Y}(X)  \\
					\underline{X}(U) & \mathcal{Y}(U)\\};
				\path[-stealth]
				(m-1-1) edge  node [left] {{\small $ \underline{X}(f) $}} (m-2-1)
				edge  node [below] {{\small $ F_X $}} (m-1-2)
				(m-2-1.east|-m-2-2) edge  node [below] {} node [below] {{\footnotesize $ F_U $}} (m-2-2)
				(m-1-2) edge  node [right] {{\small $\mathcal{Y}(f)$}} (m-2-2);
				\end{tikzpicture}
				\end{equation}where $\underline{X}(f): \underline{X}(X) \rightarrow \underline{X}(U), \ g\mapsto g\circ f$. Therefore, we have \begin{equation}
				\big(\mathcal{Y}(f) \circ F_X\big) ( id_X) \cong F_U (id_X \circ f)=F_U ( f),
				\end{equation} which proves the claim.
			\end{proof} Therefore, since the claim holds for all $U$, we conclude that \begin{equation}
			\big(\eta \circ \Theta \big)\big(F\big) = F
			\end{equation} As a result, we get the desired equivalence of categories \begin{equation}
			\eta: \mathcal{Y} (X) \xleftrightarrow{\sim} Mor_{PreStk_{\mathcal{C}}} ( \underline{X}, \mathcal{Y}) : \Theta.
			\end{equation}
		\end{enumerate}
	\end{enumerate}
\end{proof}

\begin{remark}
	2-categorical Yoneda's lemma implies that if $\mathcal{Y}: \mathcal{C}^{op} \longrightarrow Grpds$ is a moduli functor for some moduli problem, then \textit{it is always representable by $\mathcal{Y}$ in the 2-category of (pre-)stacks. }
\end{remark}


\bibliography{xbib}

\end{document}